\newtheorem*{theorem1}{Classification Theorem} 
\newtheorem{theorem}{Theorem}[section]
\newtheorem{lemma}{Lemma}[section]
\newtheorem{proposition}{Proposition}[section]
\newtheorem{remark}{Remark}[section]
\newtheorem{definition}{Definition}[section]
\newcommand{\eqnsection}{
   \renewcommand{\theequation}{\thesection.\arabic{equation}}
   \makeatletter
   \csname @addtoreset\endcsname{equation}{section}
   \makeatother}
\def \be{\begin{equation}}
\def \ee{\end{equation}}
\def \bt{\begin{theorem}} 
\def \et{\end{theorem}}
\def \bl{\begin{lemma}} 
\def \el{\end{lemma}}
\def \bea{\begin{eqnarray}}
\def \eea{\end{eqnarray}}
\def \bas{\begin{eqnarray*}}
\def \eas{\end{eqnarray*}}
\def \wh{\widehat}
\def \Z{{\mathbb{Z}}}
\def \({\left(}
\def \){\right)}
\def \bc{\begin{center} }
\def \ec{\end{center} }
\def \bs{\begin{slide} }
\def \es{\end{slide} }
\def\square{{\vcenter{\vbox{\hrule height.3pt
        \hbox{\vrule width.3pt height5pt \kern5pt
           \vrule width.3pt}
        \hrule height.3pt}}}}
\def\qed{{\hfill $\square$ \bigskip}}
\begin{document}
\def\wh{\widehat}
\def\ol{\overline}

%%%%%%In all illustrations below the column in the bottom left corner has the coordinates $(1,1)$.
\setlength{\unitlength}{1.3 cm}

\title{Fixation for coarsening dynamics in 2D slabs }
\author{Michael Damron\thanks{The research of M.D. was supported in part  by US NSF grant DMS-0901534} \\ 
\small{Indiana University, Bloomington} \\ 
\small{Princeton University} 
\and 
Hana Kogan \thanks {The research of H.K. was supported in part by US NSF grant OISE-0730136}\\ \small{Courant Institute, NYU} \\ 
\small{New York} \
\and 
C. M. Newman\thanks{The research of C.M.N. was supported in part by US NSF grants OISE-0730136
and DMS-1007524}  \\ \small{Courant Institute, NYU}
\\ 
\small{New York} \and 
Vladas Sidoravicius\thanks{The research of V.S. was supported in part by Brazilian CNPq grants 308787/2011-0 and  476756/2012-0 and Faperj grant E-26/102.878/2012-BBP}  \\ \small{IMPA, Rio de Janeiro}}
\maketitle

\begin{abstract}    
For the zero temperature limit of Ising Glauber Dynamics  on $2D$ slabs the
existence or nonexistence of vertices that do not fixate is determined as function
of slab thickness.
\end{abstract}

\section{Introduction}

In this paper, we study some natural questions concerning coarsening on
two-dimensional slabs and how the answers to those questions depend
on the width $k$ of the slab. Coarsening is a particular continuous time
Markov process (which is the zero-temperature limit of processes for
which the Ising model Gibbs distribution is stationary). The coarsening process,
which will be defined precisely below, is a particular type of majority
vote model, in which the state space is assignments of $\pm1$ to
the vertices of a (generally infinite) graph.
For the nearest neighbor
graph $\Z ^1$, it is exactly the standard voter model. We will
be interested in the case when the
initial distribution on the state space is i.i.d. product
measure with probability $p$
for a site to be $+1$.

Focusing first on the symmetric case, $p=1/2$, we note that
it is known that on $\Z^d$ with
$d = 1, \,2$, no sites fixate (almost surely). For $d=1$ this
is a result about the
standard one-dimensional voter model \cite{Ar}, while 
for $d=2$, the result
is somewhat more recent \cite{NNS}. For $d \geq 3$,
it is a wide open problem to
determine whether or not (and for which values of $d$) some
sites fixate; there
are some hints from the computational physics literature that fixation may
indeed occur for large enough $d$ \cite{St}, \cite{SKR}. See also
\cite{OKR} for interesting numerical results about non-fixation for the $d=3$
periodic cube.

Motivated by non-fixation for $d=2$ and the open $d=3$ problem,
in this paper we study
graphs that interpolate between $\Z^2$ and $\Z^3$ by considering width-$k$ slabs,
$S_k = \Z^2 \times \{0,1,\dots,k-1\}$, with free or periodic
boundary conditions in
the third coordinate. As we shall see, there is an interesting,
and somewhat unexpected,
dependence on the value of $k$. The main results of this paper are
that all sites fixate if and only if $2 \leq k \leq k_c^*$
(where the superscript $*$ is either $f$ or $p$ denoting free
or periodic boundary
conditions); for $k_c^* < k < \infty$, some sites fixate and some do not.
The critical widths are $k_c^f =2$ and $k_c^p =3$.

An announcement of these results and proofs for the simpler regions of
$k$-values appear in \cite{DKNS}. The most difficult cases of $k=3$ (periodic),
where all sites fixate,
and $k=4$ (free), where some sites do not fixate, are treated
in this paper. The proof
for $k=3$ (periodic) is of particular interest, because the
percolation theoretic
arguments may be of use in other settings. Two other
contributions of this paper
are (a) to show that the same results are valid for all $k$
for any $p \in (0,1)$,
and (b) to provide, in an appendix, a simpler analysis of the $k=2$ (periodic)
case that that given in \cite{DKNS}.

The fact that the results for slabs do not depend on $p \in (0,1)$ is also
true on $Z^1$ where there is never fixation. But this is not so in general,
as it has been proved in \cite{FSS}
that on $\Z^d$ with $d \geq 2$, all sites fixate at
$+1$ (resp., at $-1$) when $p$ is close enough to $1$ (resp., to $0$).
It is conjectured, and an important open problem to prove, that this
is in fact the case as long as $p \neq 1/2$.

We conclude our introductory remarks by mentioning two open problems
about $2D$ slabs. One is whether for those slabs with non-fixated sites,
these sites percolate rather than forming only finite connected components
surrounded by fixated sites? For small values of $k$ that seems unlikely, but
perhaps percolation can occur for large $k$. A second interesting question
is how the density of fixated sites behaves as $k \to \infty$.  In the free boundary condition setting,
one could consider the probability that the site at $(0,0,[(k-1)/2])$
fixates. If it vanishes in this limit,
that might supply a mechanism for proving that no fixation occurs in $\Z^3$. 

%%       But one should be cautious: e.g., it is not hard to see
%%       that the density of fixated sites in the strip $\Z \times \{0,0,\dots, k-1\}$
%%       is always $1$ (for $k \geq 2$), but there is no fixation in $\Z^2$.

%%%%%%%%%%%%%%%%%%%%%%%% DEFINITIONS  DEFINITIONS  DEFINITIONS

\subsection{Definitions}

The slab $S_k, ~k \geq 2$, is the graph with vertex set $\mathbb{Z}^2 \times \{0,1, \ldots, k-1\}$ 
and edge set $\mathcal{E}_k = \{\{x,y\} : \|x-y\|_1 = 1\}$. As is usual, we take an initial spin 
configuration $\sigma(0) = (\sigma_x(0))_{x \in S_k}$ on $\Omega_k = \{-1,1\}^{S_k}$ 
distributed using the product measure of $\mu_p,~ p \in (0,1)$, where
\[
\mu_p(\sigma_x(0)=+1) = p = 1- \mu_p(\sigma_x(0)=-1)\ .
\]

The configuration $\sigma(t)$ evolves as $t$ increases according to the zero-temperature 
limit of Glauber dynamics (the majority rule). To describe this, define the energy 
(or local cost function) of a site $x$ at time $t$ as 
\begin{equation}\label{eq: energy}
e_x(t) = - \sum_{y : \{x,y\} \in \mathcal{E}_k} \sigma_x(t)\sigma_y(t)\ .
\end{equation}
Note that up to a linear transformation, this is just the number of neighbors $y$ of $x$ 
such that $\sigma_y(t) \neq \sigma_x(t)$. Each site has an exponential clock with clocks 
at different sites independent of each other. When a site's clock rings, it makes an 
update according to the rules
\[
\sigma_x(t) = \begin{cases}
- \sigma_x(t^-) & \text{ if } e_x(t^-) > 0 \\
\pm 1 & \text{ with probability }1/2 \text{ if } e_x(t^-) = 0 \\
\sigma_x(t^-) & \text{ if } e_x(t^-) < 0
\end{cases}\ .
\]
Write $\mathbb{P}_p$ for the joint distribution of $(\sigma(0),\omega)$, the initial spins 
and the dynamics realizations.

The main questions we will address involve fixation. We say that the slab $S_k$ fixates 
for some value of $p$ if 
\[
\mathbb{P}_p(\text{there exists }T=T(\sigma(0),\omega) <
\infty \text{ such that } \sigma_0(t) = \sigma_0(T) \text{ for all } t \geq T) = 1\ .
\]
All of our results will hold for all $p \in (0,1)$, so we will write $\mathbb{P}$ for the 
measure $\mathbb{P}_p$. The setup thus far corresponds to the model with free 
boundary conditions; in the case of periodic boundary conditions, we consider 
sites of the form $(x,y,k-1)$ and $(x,y,0)$ to be neighbors in $S_k$. If $k=2$ 
then this enforces two edges between $(x,y,1)$ and $(x,y,0)$, so that in the 
computation of energy of a site, that neighbor counts twice.

%%%%%%%%%%%%%%%  MAIN RESULTS - MAIN RESULTS - MAIN RESULTS %%%%%%

\subsection{Main results}
Let $p \in (0,1)$ be arbitrary.

\begin{theorem}\label{02}
With periodic boundary conditions, all sites in $S_3$ fixate.
\end{theorem}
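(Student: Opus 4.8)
The plan is to reduce the theorem, by translation invariance, to showing that the origin flips only finitely often almost surely: since $\mathbb{P}$ is invariant under the $\mathbb{Z}^2$-translations, if the origin fixates with probability one then so does every site, and a countable union of null events is null. I would begin by recording the basic energy bookkeeping. In $S_3$ with periodic boundary each vertex has degree $6$ (four in-plane neighbors and the two other sites of its column, which form a triangle), so writing $n$ for the number of disagreeing neighbors of $x$, one has $e_x = 2n-6$; hence a spin flips deterministically exactly when $n\ge 4$, flips by a fair coin when $n=3$, and is frozen when $n\le 2$. Consequently the number of disagreeing edges is nonincreasing in time: an $n\ge 4$ flip strictly lowers it (by $2(n-3)$) while an $n=3$ flip leaves it unchanged. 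The distinctive difficulty at $k=3$ periodic, as opposed to e.g.\ $k=2$ free (odd degree, no ties), is precisely the presence of these energy-neutral $n=3$ flips, which are the mechanism behind non-fixation on $\mathbb{Z}^2$.

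Next I would isolate the rigid structures that the percolation argument will exploit. Call a set $A\subseteq\mathbb{Z}^2$ of columns \emph{self-supporting} if every $(x,y)\in A$ has at least two in-plane neighbors in $A$. I claim a \emph{Frozen Region Lemma}: if at some time $t_0$ every column over a self-supporting set $A$ is unanimous with one common sign, then all those $3|A|$ spins are frozen for all $t\ge t_0$. The proof is a first-flip argument inside the (say finite) set $A\times\{0,1,2\}$: the site that flips first would, at its flip instant, still see its two vertical neighbors and at least two in-plane neighbors in $A$ all of its own sign, giving at least four agreeing neighbors and so $n\le 2$, a contradiction. In particular a $2\times 2$ block of columns all of whose twelve spins are $+1$ at time $0$ is self-supporting and hence permanently frozen, and such seeds occur with probability $p^{12}>0$ at every location (likewise for $-1$); note also that two adjacent frozen regions, of possibly opposite signs, remain jointly frozen, so barriers may be assembled from patches of either sign.

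With these tools the core of the proof is a two-dimensional circuit (Peierls) argument. The goal is to show that almost surely the origin is eventually surrounded, in the $\mathbb{Z}^2$-projection, by a closed circuit of frozen columns; such a circuit is a wall of fully stable (indeed tie-free) spins that the dynamics cannot cross, so the finite region it encloses evolves as a finite-volume chain driven only by its frozen boundary. Inside that finite region the disagreeing-edge count is a nonincreasing integer, hence eventually constant, after which only $n=3$ flips are possible; one then argues, using the self-supporting boundary, that the minimal-energy configurations compatible with the frozen wall admit no persistent ties, so the interior freezes and the origin fixates. To obtain the circuit I would pass to a renormalized lattice, declare a coarse block \emph{good} when the dynamics has produced a self-supporting frozen patch spanning it, and show that good blocks occur with high probability and with enough independence (finite-range dependence suffices) to be supercritical, whence a standard circuit argument supplies the surrounding frozen loop almost surely.

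The hard part will be the renormalization estimate: showing that good (frozen-patch) blocks are genuinely dense. Freezing is not a monotone function of the initial data and the available frozen seeds carry both signs, so one cannot simply percolate single-sign seeds; instead one must prove that, run for a bounded time, a bounded block contains a self-supporting frozen patch with probability close to $1$, uniformly in the configuration outside. This quantitative local-freezing statement --- essentially that at $k=3$ periodic the vertical pair of votes supplies enough rigidity to nucleate stable clusters, which fails for the thicker, non-fixating slabs --- is where the real work lies, and it is the step I expect to occupy the bulk of the argument.
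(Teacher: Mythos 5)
Your reduction to the origin, your Frozen Region Lemma, and the positive-density seeds are all fine, and they coincide with the easy part of the paper's argument: a monochromatic $2\times 2$ block of columns is self-supporting, hence frozen forever, which gives $\mathbb{P}(C_z \text{ fixates}) \geq [\min\{p,1-p\}]^{12}$ and, by translation-ergodicity, the almost sure existence of fixated columns (Section~\ref{sec: thm_3}). From that point on, however, your plan has two gaps, and each one is essentially the whole theorem. First, the renormalization step does not deliver what you need. What is provable from the initial data alone is that a large block contains \emph{some} monochromatic $2\times 2$ seed with probability near $1$; but an annulus of blocks each containing an isolated frozen patch is not a barrier --- the enclosed region becomes an autonomous finite-volume chain only if the frozen columns form an actual closed circuit in $\mathbb{Z}^2$, so your ``good'' blocks need frozen patches that \emph{span} each block and link up with the patches of neighboring blocks. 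Spanning self-supporting monochromatic patches have probability exponentially small in the block size at time $0$, so you are forced to claim that the \emph{dynamics} produces them in bounded time with probability near $1$, ``uniformly in the configuration outside.'' That statement is a quantitative local form of fixation, i.e.\ at least as strong as the theorem being proved, and the uniformity over outside configurations is precisely the kind of control that this non-monotone, two-sign dynamics does not obviously admit. You yourself flag this step as ``where the real work lies,'' but no idea is offered for it, and nothing like it appears in the paper, whose route avoids any such estimate.

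Second, even granting a frozen circuit, your claim that the finite interior then fixates (``the minimal-energy configurations compatible with the frozen wall admit no persistent ties'') is unsupported, and it cannot follow from soft arguments: the paper's proof of Theorem~\ref{03} (Section~\ref{sec: thm_4}, Figure~\ref{fig: 4_case}) exhibits, in the periodic $S_4$ slab, a finite region completely surrounded by fixated spins inside which spins flip infinitely often by exactly such energy-neutral ($n=3$) moves. So ``frozen wall around a finite region implies the interior fixates'' is false at $k=4$, and whatever makes it true at $k=3$ must exploit the specific geometry of three periodic levels. That content is what the paper supplies through Proposition~\ref{11} (columns that flip finitely often are eventually monochromatic, proved via the minimal-recurrent-cluster percolation argument of Section~3) and Proposition~\ref{12} (neighbors of fixed columns are fixed, proved via the case analysis of Lemmas~\ref{30}--\ref{32}); with these, fixation spreads from the ergodic-density seeds to every column directly, with no circuit or renormalization at all. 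Your proposal contains no substitute for these two propositions, so the two steps you defer are not technical details but the entire difficulty of the theorem.
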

\noindent The proof of Theorem~\ref{02} is given in Section~\ref{sec: thm_3}, using the results of 
Sections~\ref{sec: prelim} and \ref{sec: second_prop}.
In the appendix we give 
a simplified proof for fixation of all sites in $S_2$ with periodic boundary conditions. It does 
not use a comparison to bootstrap percolation (as in \cite{DKNS}) and therefore 
should allow for more general initial measures for $\sigma(0)$.

\begin{theorem} \label{03}
With periodic boundary conditions some sites in $S_4$ do not fixate.
\end{theorem}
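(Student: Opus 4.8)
The plan is to prove non-fixation in the periodic slab $S_4$ by exhibiting a positive-probability local configuration that creates a permanently "active" region — a cluster of sites that can never settle into a fixed state because their neighborhood remains perpetually in a state of zero or positive energy, forcing infinitely many flips. Since the result only asserts that \emph{some} sites do not fixate, it suffices to show that with positive probability the origin (or a designated site) flips infinitely often; by translation invariance and ergodicity of the product initial measure together with the dynamics, a positive probability of non-fixation at a site upgrades to a positive density of non-fixating sites, which is enough.

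\medskip

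The key structural observation I would exploit is that the periodic identification in the third coordinate makes $S_4$ behave, column by column, like a $\Z^2$ array of periodic $4$-cycles (the four vertices $(x,y,0),(x,y,1),(x,y,2),(x,y,3)$ with $3\sim 0$). First I would analyze the dynamics of a single such $4$-cycle in isolation, or embedded in a frozen horizontal environment: a vertical column whose two horizontal-neighbor contributions cancel (for instance, flanked by columns arranged so each site sees equal numbers of $+$ and $-$ horizontal neighbors) reduces to a one-dimensional zero-temperature voter model on the $4$-cycle $\Z/4\Z$, and one checks directly that certain configurations on a cycle never freeze — any site with exactly one agreeing and one disagreeing vertical neighbor sits at energy zero and keeps getting reshuffled. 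The idea is to build a ``blinker'' or perpetually-oscillating seed: a finite set of columns whose local energy balance keeps at least one site at zero energy forever, regardless of how the rest of the slab evolves, so long as a surrounding ``buffer'' of frozen sites shields it. This is where the periodicity is essential — it is precisely the doubled connectivity around the $4$-cycle (absent in the free case $k=4$, which fixates only up to $k_c^f=2$, no wait, which does \emph{not} fixate for $k=4$ free either) that permits an energetically balanced oscillating core.

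\medskip

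Concretely, first I would identify an explicit finite seed configuration $\xi$ on a bounded box $B \subset S_4$ together with a prescription for the spins on $\partial B$ such that: (i) the boundary spins form a monochromatic or otherwise energetically stable ``wall'' that itself fixates and never flips, insulating $B$ from the outside; and (ii) inside $B$, the restricted dynamics admits at least one site pinned at energy $\le 0$ for all time, so it flips infinitely often (or at least fails to settle). Second, I would verify that this seed is \emph{self-sustaining under the actual dynamics}: that the outer wall, seeing the oscillating interior on one side and the vast product-measure sea on the other, does not itself get destabilized — this typically requires choosing the wall thick enough (two or three columns) that interior fluctuations never raise a wall site's energy above zero. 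Third, I would observe that such a seed occurs in the initial i.i.d.\ configuration with positive probability (it is a cylinder event of finite positive $\mu_p$-measure for every $p\in(0,1)$), and that on this event the designated interior site does not fixate almost surely.

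\medskip

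The main obstacle I anticipate is step (ii) of the self-sustaining verification: showing that the oscillating core cannot ``heal'' — that is, ruling out the possibility that the dynamics eventually drives the interior into a frozen configuration despite the energy bookkeeping. In these zero-temperature models the subtle point is that a site at energy zero flips with a fair coin, so one cannot simply assert perpetual oscillation; rather one must design the seed so that \emph{every} reachable configuration of the insulated core still contains a zero-energy site, i.e. the set of admissible interior configurations is closed under the dynamics and contains no stable (all-negative-energy) state. Establishing this closure — essentially finding an invariant set of frustrated configurations on the periodic $4$-cycle columns from which the dynamics cannot escape to a fixated state — is the combinatorial heart of the argument, and I would approach it by a finite case analysis of the energy landscape of the seed box, exploiting the cancellation afforded by the periodic vertical $4$-cycles to guarantee that frustration is never resolved.
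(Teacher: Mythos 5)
Your overall strategy---an explicit finite seed of positive $\mu_p$-probability, a frozen wall shielding a core whose reachable configurations contain no stable state, and then translation ergodicity to upgrade positive probability to almost-sure existence of non-fixating sites---is exactly the structure of the paper's proof, and your closure criterion (the set of admissible interior configurations is invariant under the dynamics and contains no all-negative-energy state) is the correct one. But there is a genuine gap: for this theorem the explicit example \emph{is} the proof, and you never produce one. Everything else (positive probability of a cylinder event, ergodicity, the use of Lemma~\ref{22} to turn recurrent zero-energy sites into infinitely many flips) is routine; the entire content lies in exhibiting a configuration and verifying properties (i) and (ii), which you defer to a future ``finite case analysis.'' The paper's proof is precisely such an exhibit (Figure~\ref{fig: 4_case}): a $14\times 12$ block of columns in which all specified spins outside a distinguished row of four columns are frozen no matter what the core does, while the four level-$0$ spins of that row oscillate forever, an interface sweeping across and back, with an unstable site present in every reachable configuration.

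Moreover, the one concrete mechanism you propose is flawed. A column whose horizontal contributions cancel does reduce to zero-temperature voter dynamics on the cycle $\Z/4\Z$, but it is false that such a column ``never freezes'': from any configuration there is a positive-probability finite sequence of flips reaching consensus, so by Lemma~\ref{22} consensus is a.s.\ reached eventually, and once the column is monochromatic every site has energy $-2<0$ (the horizontal fields cancel) and the column is frozen. You have conflated ``contains a zero-energy site now'' with ``contains a zero-energy site in every reachable configuration''; the latter is what your own closure criterion demands, and balanced fields cannot deliver it. A non-freezing core needs \emph{asymmetric} frozen fields---for instance, external fields $+2,-2,+2,-2$ on the four sites of a periodic column rule out any frozen state, and the paper instead uses a horizontal row of four sites whose end fields make both extreme states unstable at opposite ends---but even granting such fields, the real work is constructing a wall that simultaneously supplies them and remains frozen (at least four agreeing neighbors at every wall site) against the oscillating core and the unconditioned exterior. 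That global consistency check is what the paper's figure accomplishes and what your proposal leaves entirely open. Finally, your aside that periodicity is ``essential'' is off the mark: the free-boundary slab $S_4$ also has non-fixating sites (as you half-recalled), so the doubled connectivity of the $4$-cycle is not what makes an oscillating core possible.
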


\noindent The proof of Theorem \ref{03} is given in Section~\ref{sec: thm_4}. 
Results for free boundary conditions (for all $k$) or periodic conditions for $k=2$ 
and $k\geq 5$ were proved in \cite{DKNS}  for $p=1/2$. It is straightforward   to see 
that the proofs in  \cite{DKNS}  extend to all $p\in (0,1)$.    Combined with the preceding two theorems, we 
have the following  complete characterization of slabs, where we say that $S_k$ fixates if all sites in $S_k$
fixate.

\begin{theorem1} \label {01}
With free boundary conditions, $S_k$ fixates if and only if $k = 2$. With periodic 
boundary conditions, $S_k$ fixates if and only if $k \leq 3$.
\end{theorem1}

\medskip

\noindent {\bf Remark.} It is an elementary fact that for $k\geq 2$ and either free or periodic boundary conditions, 
some sites fixate. Thus in all cases where $S_k$ does not fixate, there are sites that fixate and sites that do not.

%%%%%%%%%%%%%%% PRELIMINARY RESULTS - PRELIMINARY RESULTS %%%%%%

\section{Preliminary results}\label{sec: prelim}

We will need to develop some terminology and recall some results before proceeding. 
All results hold for slabs with periodic boundary conditions unless stated otherwise. 
We say a vertex $v$ \emph{flips} at time $t$ if $\sigma_v(t^-) \neq \sigma_v(t)$.

\begin{definition}
A vertex is called variable in the realization $(\sigma(0),\omega)$ if it flips infinitely 
many times. We call a flip at time $t$ of a vertex $v$ energy lowering if $e_v(t^-)>0$.
\end{definition}
Note that if a vertex flips infinitely many times then the set of times at which it flips 
is almost surely unbounded. This follows from the fact that the waiting time for 
clocks has a non-degenerate distribution. The following lemma is proved in \cite{NNS} 
in some generality and applies to the slab $S_k$ for any $k$.

\bl \label{26}
Any vertex in $S_k$ has almost surely only finitely many energy lowering flips.
\el

For the proofs of the main results, we need the notion of stability. 
\begin{definition}
The vertex $v$ is called unstable in $\sigma \in \Omega_k$ if 
$ -\sum_{w : \{v,w\} \in \mathcal{E}_k} \sigma_v\sigma_w \geq 0$. Otherwise $v$ is stable. 
A vertex is stable (unstable) at time $t$ if it is stable (unstable) in $\sigma(t)$.
\end{definition}
\noindent We make the following observation regarding stability. For the statement, 
we say that an event $A \subset \Omega_k$ \emph{occurs infinitely often} if the 
set of times $\{t : \sigma(t) \in A\}$ is unbounded.

\bl\label{lem: stability}
Let $v \in S_k$. With probability one the following statements hold.
\begin{enumerate}
\item There exists $T = T(v,\sigma(0),\omega)$ such that 
$-\sum_{w : \{v,w\} \in \mathcal{E}_k} \sigma_v(t)\sigma_w(t) \leq 0$ for all $t \geq T$.
\item $v$ is variable if and only if it is unstable infinitely often if and only if it has 
exactly three same sign neighbors infinitely often.
\end{enumerate}
%\item{} Let $D'$ be a subset of $S_k$ and $\sigma \in \{-1,1\}^{S_k}$. If $v$ is 
%$1$-unstable conditional on $D'$ then it has at least four neighbors that are 
%unstable conditional on $D'$. {\tt check this later based on what we need.}
%\item{}  $x$ is eventually fixed if and only if for each $m$, there exists 
%$t_0 = t_0(m,x,\sigma(0),\omega)$ such that for $t\geq t_0$, $x$ is 
%$m$-unstable in the configuration $\sigma(t)$.
%\end{enumerate}
\el
\begin{proof}
For the proof, we use the following simple lemma from \cite{DKNS}:

\bl \label{22} Let $A$, $B$ be cylinder events in $\Omega_k$. If 
\[
\inf_{\sigma \in A} \mathbb{P}\left(\sigma(t) \in B \text{ for some } t \in [0,1) ~\big|~ \sigma(0) = \sigma\right)>0\ ,
\] 
then
\[
\mathbb{P}\left(B \text{ occurs infinitely often} ~\big|~ A \text{ occurs infinitely often}\right) = 1\ .
\]
%In particular, denoting the event $\{B \text{ occurs infinitely often}\}^c$ by 
%$\{B \text{ is e.absent}\}$ then if $B$ is e.absent, $A$ is also e.absent. 
\el
If with positive probability $e_v(t) > 0$ infinitely often, then an application of 
Lemma~\ref{22} shows that $v$ has infinitely many energy-lowering flips 
with positive probability, contradicting Lemma~\ref{26}. This proves the 
first statement.

Next, if $v$ is variable, it must flip infinitely often and, letting $t$ be one 
time at which $v$ flips, $\sigma_v(s) \geq 0$ for all $s<t$ sufficiently close 
to $t$. This means $v$ is unstable infinitely often. Conversely, if $v$ is 
unstable infinitely often, $e_v(t) \geq 0$ infinitely often, and using Lemma~\ref{22} 
with $A = \{v \text{ is unstable}\}$ and $B = \{\sigma_v = +1\}$ or 
$B = \{\sigma_v = -1\}$ we see that $v$ flips infinitely often.

If $v$ is unstable infinitely often then $e_x(t) \geq 0$ infinitely often. 
By the first statement of this lemma, $e_v(t) = 0$ infinitely often, and 
$v$ has exactly three same sign neighbors infinitely often. Conversely, if
$v$ has exactly three same sign neighbors infinitely often then it is clearly 
unstable infinitely often.
%{\tt proof of 2} Next assume that $v$ is $1$-unstable conditional on $D'$ 
%
%in $\sigma$. 
\end{proof}

%The following observation may be extended to site strings of any length:
%\bl \label{23}
%Any cylinder event containing two $0$-stable neighbor sites of the same 
%sign is e.absent. Furthermore, an event containing two $0$-stable sites 
%that are neighbors of the same $1$-stable site is e.absent if all three sites 
%are of the same sign.
%\el 

\section{Fixed columns are monochromatic}

For $(x,y) \in \mathbb{Z}^2$ we write $C_{x,y}$ for the column of vertices at coordinate $(x,y)$:
\[
C_{x,y} = \{(x,y,i) : i = 0, \ldots, k-1\}\ .
\]
A column $C$ is \emph{monochromatic} in $\sigma \in \Omega_k$ if $\sigma_v = \sigma_w$ for all $v,w \in C$. A realization $(\sigma(0),\omega)$ of initial configuration and dynamics is said to be \emph{eventually in } $A \subset \Omega_k$ if there is some $T=T(\sigma(0),\omega,A)$ such that if $t \geq T$ then $\sigma(t) \in A$. We say a column \emph{flips finitely often} if each of its vertices flips finitely often.

In this section we prove the following.
\begin{proposition} \label{11}
With probability one, if a column in $S_3$ flips finitely often then it is eventually monochromatic.
\end{proposition}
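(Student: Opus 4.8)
The plan is to argue by contradiction, after first converting the hypothesis into a statement about a frozen configuration. Since the column $C_{x,y}$ flips finitely often, each of its three vertices flips finitely often, so by Lemma~\ref{lem: stability} (together with the fact that a vertex with $e_v = 0$ infinitely often would flip infinitely often) each column vertex is \emph{strictly} stable for all large $t$: there is a time $T$ after which $e_v(t) < 0$ for every $v \in C_{x,y}$ and the three spins of the column are frozen. Suppose, for contradiction, that the frozen configuration is not monochromatic. Under periodic boundary conditions with $k=3$ the three column sites form a triangle (each is adjacent to the other two), so every vertex has degree $6$ and all three layers play symmetric roles; thus a non-monochromatic frozen column is a $2$-$1$ split, and by this symmetry together with the global spin-flip symmetry we may assume the minority vertex is $(x,y,1)$ with spin $-1$ and $(x,y,0) = (x,y,2) = +1$.

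The core computation is the stability of these three frozen spins. For the minority vertex $(x,y,1)$ both of its vertical (in-column) neighbors are $+1$, hence disagree; since it has six neighbors, $e_{(x,y,1)}(t) < 0$ forces at least four agreements, so \emph{all four} of its in-layer neighbors $(x\pm 1, y, 1)$ and $(x, y\pm 1, 1)$ must equal $-1$ at every $t \geq T$. This is a zero-slack constraint: it pins those four layer-$1$ vertices to $-1$ forever, so layer $1$ carries a frozen ``cross'' of $-1$'s centered at $(x,y)$. For each majority vertex, say $(x,y,0) = +1$, one vertical neighbor agrees and one disagrees, so $e_{(x,y,0)}(t) < 0$ forces at least three of its four in-layer neighbors to be $+1$ at every $t \geq T$; the same holds in layer $2$.

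I would then try to propagate this rigidity. The natural hope is that each neighboring column is again a frozen $2$-$1$ column of the same type, which would spread the pinned cross and force an infinite connected region on which layer $1$ is pinned at $-1$ and layers $0,2$ at $+1$, a fully ``striped'' frozen set. The combinatorial engine here is the pair of majority constraints: at each such column at least three of four neighbors are $+1$ in layer $0$ and at least three of four are $+1$ in layer $2$, so by inclusion-exclusion at least two common directions are $+1$ in both layers, which (together with the pinned layer-$1$ spins) would make those two neighboring columns $2$-$1$ of the same type and let the construction iterate to infinity.

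The main obstacle is exactly this propagation, and it is where the real work lies. The minority computation pins neighbors only with \emph{zero} slack, i.e. only when a neighboring column is already known to be $2$-$1$ with both vertical spins $+1$; but the neighboring columns are not assumed to flip finitely often, and the majority constraints are instantaneous inequalities that need not pin any individual neighbor in layers $0$ or $2$. Closing the argument therefore requires controlling columns that may still be variable --- presumably the percolation-theoretic input advertised for the $k=3$ case --- so as to show that a frozen, non-monochromatic column forces a genuinely infinite rigid frozen pattern. One then rules that pattern out almost surely: the existence of such an infinite translation-determined striped frozen region is a translation-invariant event, so has probability $0$ or $1$, and it cannot have probability $1$ (e.g. by spin-flip symmetry, or directly by a Lemma~\ref{22}-style local-surgery estimate showing a seed of disagreement is, with conditionally bounded-below probability, destroyed). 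Hence the frozen configuration must be monochromatic with probability one.
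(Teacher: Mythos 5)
Your opening rigidity computation is correct (a column that flips finitely often is eventually frozen and \emph{strictly} stable, so a frozen minority spin pins its four in-layer neighbors, and each frozen majority spin forces at least three of its four in-layer neighbors to agree), and in fact computations of exactly this kind appear in the paper. But the proof stops where the real proof has to begin. The propagation step is not an obstacle you partially overcome --- you explicitly leave it open (``presumably the percolation-theoretic input\dots''), and as you yourself observe, the majority constraints are instantaneous: which three of the four neighbors agree can change with time, so nothing outside the single pinned cross in the minority layer is ever frozen by this argument. Worse, the proposed endgame is unsound even if propagation were granted. The fully striped configuration (minority layer all $-1$, the other two layers all $+1$) makes \emph{every} vertex strictly stable, so it is an absorbing fixed point of the dynamics; consequently no Lemma~\ref{22}-style surgery can ``destroy a seed of disagreement'' inside such a region --- Lemma~\ref{22} only yields contradictions from configurations that recur with flippable (unstable or marginally stable) vertices, and a frozen striped region has none. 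The appeal to global spin-flip symmetry also fails, since the results are claimed for all $p\in(0,1)$, not just $p=1/2$, and a $0$--$1$ law by itself produces no contradiction. So ruling out an infinite striped frozen region is not a softer auxiliary fact; it is essentially the proposition itself.

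The paper's route is structurally different and avoids infinite rigid regions altogether. First, by layer-permutation symmetry and translation ergodicity, if frozen non-monochromatic columns exist then \emph{both} species occur --- type-1 columns $(+,+,-)$ and type-2 columns $(+,-,+)$ --- and with positive probability they occur eventually at three prescribed corners of one finite box $B$. Second, a planarity argument (Lemma~\ref{lem: star_connection}) shows the type-1 $*$-cluster of the origin cannot cross $B$: a crossing would force a $2\times 2$ block with type-1 on one diagonal and type-2 on the other, in which some vertex has at least four disagreeing neighbors, so energy-lowering flips would occur infinitely often, contradicting Lemma~\ref{26}. Third, one passes to a \emph{minimal recurrent} $1$-cluster in $B$ --- a finite combinatorial object --- picks a corner of it, runs precisely the stability computation you ran, and then a three-flip surgery via Lemma~\ref{22} shrinks the cluster, contradicting minimality (Lemma~\ref{25}). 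The two ideas you are missing are exactly these: playing the two column types against each other through planarity, and replacing ``infinite frozen pattern'' by ``finite minimal recurrent cluster,'' which is the object that surgery \emph{can} touch.
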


We note that this result is parallel to the one for $S_2$ shown in \cite{DKNS}: all columns in a slab are eventually monochromatic if and only if all sites in the slab flip finitely often. However we will see in Section~\ref{sec: thm_4} that it fails for $S_4$.

\begin{proof}
The proof will proceed by contradiction, so assume that with positive probability there is a column that flips finitely often but is not eventually monochromatic. It must then have a terminal state; that is, the spins at vertices in this column have a limit as $t\to \infty$. This limit is assumed to be non-monochromatic, so we begin the analysis by defining a site percolation process on $\mathbb{Z}^2$ corresponding to certain non-monochromatic sites. Given a configuration $\sigma \in \Omega_k$ and $(x,y) \in \mathbb{Z}^2$, we say that the column $C_{x,y}$ is type-1 in $\sigma$ if
\[
\sigma_{(x,y,0)} = \sigma_{(x,y,1)} =+1 \text{ but } \sigma_{(x,y,2)} = -1\ .
\]
and type-2 if $\sigma_{(x,y,0)}=\sigma_{(x,y,2)}=+1$ but $\sigma_{(x,y,1)}=-1$. We define $\eta = \eta(\sigma) \in \{0,1,2\}^{\mathbb{Z}^2}$ by
\[
\eta_{(x,y)} = \begin{cases}
1 & \text{ if } C_{x,y} \text{ is type-1 in } \sigma \\
2 & \text{ if } C_{x,y} \text{ is type-2 in } \sigma \\
0 & \text{ otherwise}
\end{cases}\ .
\]
If $\eta_{(x,y)}=r$ then we say $(x,y)$ is type-$r$ in $\eta$. The pair $(\sigma(0),\omega)$ induces a configuration $\eta(t) = \eta(\sigma(t))$. Let 
\[
A_r(x,y) = \{C_{x,y} \text{ is eventually type-} r\} \text{ for } r = 1, 2\ .
\]
By the assumption that there exist columns that flip finitely often but are not eventually monochromatic, we must have either $\mathbb{P}(A_r(x,y))>0$ for some $r$ and all $(x,y)$ or the corresponding statement with a global flip; that is $\mathbb{P}(B_r(x,y))>0$ for some $r$ and all $(x,y)$, where $B_r(x,y)$ is the event that $C_{x,y}$ is eventually type-$r$ in the configuration $\eta(-\sigma(t))$, induced by the global flip $-\sigma(t)$. Both cases are handled identically, so we will assume here that $\mathbb{P}(A_r(x,y))>0$ for some $r$.

By spatial symmetry, we must then have $\mathbb{P}(A_r(x,y))>0$ for all $r$ and all $(x,y)$. By translation-ergodicity of the model there are almost surely infinitely many values of $n \in \mathbb{N}$ such that each $A_r(n,0)$ occurs. It follows that there exist $M_0,N_0 \in \mathbb{N}$ such that
\begin{equation}\label{eq: assumption_1}
\mathbb{P}(A_1(0,0) \cap A_2(M_0,0) \cap A_2(0,N_0)) > 0\ .
\end{equation}

Next we recall the notion of $*$-connectedness: two vertices $w,z \in \mathbb{Z}^2$ are \emph{neighbors} if $\|w-z\|_1 = 1$ and are $*$\emph{-neighbors} if $\|w-z\|_\infty = 1$. A \emph{path} is a sequence of vertices $(w_1, \ldots, w_k)$ such that $w_i$ and $w_{i+1}$ are neighbors for $i=1, \ldots, k-1$ and a $*$\emph{-path} is a sequence such that $w_i$ and $w_{i+1}$ are $*$-neighbors for $i=1, \ldots, k-1$. Given a realization $\eta \in \{0,1,2\}^{\mathbb{Z}^2}$ and $r \in \{0,1,2\}$, the \emph{$r$-cluster} ($r*$\emph{-cluster}) of a vertex $z$ is the set of vertices of type-$r$ which are connected to $z$ by a path ($*$-path) all of whose vertices are type-$r$. Note that if $z$ is not of type-$r$ then both its $r$-cluster and $r*$-cluster are empty. Two sets $V,U \subset \mathbb{Z}^2$ are $r$-connected ($r*$-connected), written $V \rightarrow_r U$ ($V \rightarrow_{r*}U$) if there are vertices $v \in V$ and $u \in U$ such that the $r$-cluster ($r*$-cluster) of $v$ contains $u$. If this connection can be made using only vertices in some set $D$, we say $U$ and $V$ are $r$-connected ($r*$-connected) in $D$ and write $U \xrightarrow{D}_r V$ ($U \xrightarrow{D}_{r*} V$).

In the box $B = \{0, \ldots, M_0\} \times \{0, \ldots, N_0\}$, write $L = \{0\} \times \{0, \ldots, N_0\}$, $R=\{M_0\} \times \{0, \ldots, N_0\}$, $D=\{0, \ldots, M_0\} \times \{0\}$ and $U=\{0, \ldots, M_0\} \times \{N_0\}$ for the left, right, lower and upper sides respectively. We note the following property of $r*$-clusters of $\eta(t)$ in this box:
\bl \label{lem: star_connection}
Assume \eqref{eq: assumption_1} and let $E$ be the event (in $\{0,1,2\}^{\mathbb{Z}^2}$) that $(0,0)$ is not $1*$-connected in $B$ to $R \cup U$. Then 
\[
\mathbb{P}(A_1(0,0) \text{ but } \eta(t) \in E \text{ infinitely often}) > 0\ .
\]
\el
\begin{proof}
Let $\eta$ a configuration such that the following three conditions hold: 
\begin{equation}\label{eq: bad_condition}
(0,0) \xrightarrow{B}_{1*} R \cup U,~(M_0,0) \xrightarrow{B}_{2*} L \cup U \text{ and } (0,N_0) \xrightarrow{B}_{2*} R \cup D
\end{equation}
and write $\tilde B$ for the set of type-2 vertices in $B$. By planarity, the last two conditions ensure that the connected component $C$ of $(0,0)$ in $B \setminus \tilde B$ does not intersect $R \cup U$. So write $B$ as a disjoint union $C \cup \tilde B \cup \hat C$, where $\hat C$ is defined as $B \setminus (C \cup \tilde B)$. Note that because $C$ is a maximal connected subset of $B \setminus \tilde B$, $C$ does not contain a vertex that is a neighbor of $\hat C$.

The first condition in \eqref{eq: bad_condition} gives a $*$-connected path $P$ (with vertices written in order as $x_0, \ldots, x_n$) from $(0,0)$ to $R \cup U$ in $B$ all of whose vertices are type-1. Because $x_0 \in C$ and $x_n \notin C$ there exists a first $i$ such that $x_i \in C$ but $x_{i+1} \notin C$. Both of these vertices are type-1, so they cannot be in $\tilde B$. This means $x_i \in C$ but $x_{i+1} \in \hat C$, so they are not neighbors, they are $*$-neighbors. Both of these vertices share a $2\times 2$ block with two other vertices $a$ and $b$. If either of $a$ or $b$ were in $C$ (or $\hat C$), they would be neighbors to a vertex in $\hat C$ (or $C$), a contradiction, so they must be in $\tilde B$ and thus type-2. Summarizing, we can find a vertex $v \in \{0, \ldots, M_0-1\} \times \{0, \ldots, N_0-1\}$ such that either
\begin{enumerate}
\item $\eta_v = \eta_{v+(1,1)} = 1$ and $\eta_{v+(1,0)} = \eta_{v+(0,1)} = 2$ or
\item $\eta_v = \eta_{v+(1,1)} = 2$ and $\eta_{v+(1,0)} = \eta_{v+(0,1)} = 1$.
\end{enumerate}
If $\eta(t)$ satisfies \eqref{eq: bad_condition} then in the first case above, writing $v=(x,y)$, the vertex $(x,y,2)$ has a negative spin but at least 4 neighbors with positive spin, giving it an opportunity for an energy-lowering flip. A similar statement holds in the second case. Therefore Lemmas~\ref{22} and \ref{26} show that almost surely, $\eta(t)$ satisfies \eqref{eq: bad_condition} only finitely often.

On the event $A_1(0,0) \cap A_2(M_0,0) \cap A_2(0,N_0)$ in \eqref{eq: assumption_1}, almost every configuration must fail to satisfy at least one of the conditions in \eqref{eq: bad_condition}. Therefore at least one of the following three events has positive probability: 
\begin{enumerate}
\item $A_1(0,0) \cap \left\{ \eta(t) \in \{(0,0) \xrightarrow{B}_{1*} R \cup U\}^c \text{ infinitely often}\right\}$
\item $A_2(M_0,0) \cap \left\{ \eta(t) \in \{(M_0,0) \xrightarrow{B}_{2*} L \cup U\}^c \text{ infinitely often}\right\}$
\item $A_2(0,N_0) \cap \left\{ \eta(t) \in \{(0,N_0) \xrightarrow{B}_{2*} R \cup D\}^c \text{ infinitely often}\right\}$.
\end{enumerate}
However spatial symmetry of $\mathbb{P}$ implies that these events have the same  probability. Therefore the first has positive probability and we are done. 
\end{proof}

The previous lemma imposes a certain restriction on the geometry of the $1*$ cluster of $(0,0)$ in $B$. To take advantage of this, we consider minimal clusters. Given $r \in \{0,1,2\}$, we say that a set $V \subset B$ is a \emph{recurrent $r$-cluster} in $B$ for the realization $(\sigma(0),\omega)$ if, infinitely often, $V$ is the intersection of an $r$-cluster of $\eta(t)$ with $B$. $V \subset B$ is a \emph{minimal recurrent $r$-cluster} if it is a recurrent $r$-cluster but no proper subset of $V$ is. On the event in Lemma~\ref{lem: star_connection}, there is a recurrent cluster in $B$ that contains $(0,0)$ but no point of $R\cup U$, so there is a minimal such cluster. Because there are only finitely many clusters in this box, we can fix $V \subset B$ such that $V$ contains $(0,0)$, $V$ does not intersect $R \cup U$ and 
\begin{equation}\label{eq: V_condition}
\mathbb{P}(V \text{ is a minimal recurrent }1 \text{-cluster}) > 0\ .
\end{equation}

$V$ must contain a vertex $v$ such that 
\begin{equation}\label{eq: v_condition}
v \in V \text{ but } v+(1,0),~v+(0,1) \text{ and } v+(1,1) \notin V\ .
\end{equation}
To see this, choose any vertex $v \in V$ with maximal $\ell_1$ norm. Since $V$ is finite, such a $v$ exists. Because $V$ does not intersect $R \cup U$, the vertices $v+(1,0)$, $v+(0,1)$ and $v+(1,1)$ must be in $B$. Since they have $\ell_1$ norm larger than that of $v$, they also cannot be in $V$.

The following lemma will contradict \eqref{eq: V_condition} and complete the proof of Proposition~\ref{11}.
\bl \label{25}
Let $V \subset B$ be such that \eqref{eq: v_condition} holds for some $v$. Then
\[
\mathbb{P}(V \text{ is a minimal recurrent }1 \text{-cluster})=0\ .
\]
\el
\begin{proof}
Assume the probability is positive and consider a configuration $(\sigma(0),\omega)$ in which $V$ is a minimal recurrent 1-cluster. Define $t_1 < t_2 < \cdots$ to be a (random) sequence of times with $t_n \to \infty$ such that $V$ is the intersection of a 1-cluster of $\eta(t)$ with $B$. An application of Lemma~\ref{22} along with minimality shows that for all large $n$, all sites in the column $C_v$ are stable in $\sigma(t_n)$. Therefore, writing $v=(x,y)$, the vertex $(x,y,2)$, which has a negative spin and two neighbors in $C_v$ with positive spin, must have all other neighbors with negative spin. This implies that for all large $n$, the spins at $(x+1,y,2)$ and $(x,y+1,2)$ are negative in $\sigma(t_n)$. Furthermore, these vertices must be stable for all large $n$, lest at least one flips to +1 and makes $(x,y,2)$ unstable. By (3.4) the remaining spins of $C_{(x+1, y)}$ and $C_{(x, y+1)}$ can not both be positive. Hence for all large $n$, at least one other vertex in each of $C_{(x+1,y)}$ and $C_{(x,y+1)}$ must have negative spin.

Stability of vertices $(x,y,0)$ and $(x,y,1)$ (both of which have positive spin) gives now that only one vertex of each of $C_{(x+1,y)}$ and $C_{(x,y+1)}$ with third coordinate not equal to 2 can have negative spin. In addition, they must have different third coordinate. Using symmetry, we have now argued that if the lemma fails, then with positive probability, the configuration pictured in Figure~\ref{fig: minimal_fig} occurs in $\sigma(t_n)$ for an increasing sequence $(t_n)$ growing to infinity.

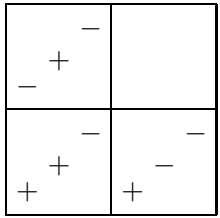
\begin{figure}[h]
\begin{center}
\setlength{\unitlength}{1.4cm}
\begin{picture}(2, 2.5)
\multiput(0,0)(0,1){3}{\line(1,0){2}}
\multiput(0,0)(1,0){3}{\line(0,1){2}}
\multiput(.1, .15)(1,0){2}{$+$}\multiput(.4, .4)(0,1){2}{$+$}
\multiput(.7,.7)(1,0){2}{$-$}\put(.7,1.7){$-$}
\put(.1,1.15){$-$}\put(1.4,.4){$-$}
%\multiput(.45, .45)(1,0){2}{$+$}\multiput(.75, .75)(0,1){2}{$+$}
%\put(.45, 1.45){$-$}\put(1.75, .75){$-$}
%\put(1.1, 1.15){$+$} \put(1.45, 1.45){$+$} \put(1.75, 1.75){$+$}
\end{picture}
\caption{Depiction of the configuration near a corner in a minimal recurrent 1-cluster at a large time. The bottom left box represents the column $C_v$ and the spins are listed for vertices in this column in increasing third coordinate.}
\label{fig: minimal_fig}
\end{center}
\end{figure}

In the above configuration, for large $n$, we again invoke stability, but of the vertices $(x+1,y,0)$ and $(x,y+1,1)$ with positive spin. This implies that the vertices $(x+1,y+1,0)$ and $(x+1,y+1,1)$ must have positive spin. However, at these $t_n$'s, the column $C_{(x+1,y+1)}$ is not type-1, so the spin at $(x+1,y+1,2)$ is $+1$, and the column is monochromatic. We have now reached a contradiction: at each of these times, a finite sequence of flips can force the minimal cluster to shrink. The spin at $(x+1,y,1)$ can flip to +1, followed by the spin at $(x+1,y,2)$ and then the spin at $(x,y,2)$. Applying Lemma~\ref{22} completes the proof.
\end{proof}
Under assumption \eqref{eq: assumption_1}, we derived inequality \eqref{eq: V_condition}. The contradiction given by Lemma~\ref{25} implies that \eqref{eq: assumption_1} must have been false and we are done.
\end{proof}

\section{Fixed columns proliferate}\label{sec: second_prop}
In this section we prove  that the neighbors of fixed columns are fixed.

\begin{proposition}\label{12}
Let $u,v \in \mathbb{Z}^2$ be neighbors. With probability one, if $C_u$ flips finitely often, then so does $C_v$.
\end{proposition}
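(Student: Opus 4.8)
The plan is to argue by contradiction and to show that a variable column adjacent to a fixed column forces an energy-lowering flip to recur, in violation of Lemma~\ref{26}. Suppose with positive probability $C_u$ flips finitely often while $C_v$ does not. By Proposition~\ref{11}, $C_u$ is then eventually monochromatic, and after applying the global spin-flip symmetry I may assume it is eventually $+1$ at all three heights and never flips again; call this time $T$. Since each vertex of $C_u$ flips finitely often, part~2 of Lemma~\ref{lem: stability} shows that each $(u,i)$ is eventually \emph{stable}. Because $k=3$ with periodic boundary makes the three heights of a column a triangle, the two in-column neighbors of $(u,i)$ are permanently $+1$ after $T$, so stability forces at least two of the four lateral neighbors of each $(u,i)$ to be $+1$ as well. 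The single structural fact I will exploit is the weakest of these: the spin at $(u,i)$ is a \emph{permanent} $+1$ neighbor of the corresponding vertex $(v,i)$.

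Next I would extract a variable vertex. Since $C_v$ flips infinitely often, some $(v,i)$ is variable, so by Lemma~\ref{lem: stability} it is unstable infinitely often; combined with part~1 of that lemma, $(v,i)$ has \emph{exactly three} same-sign neighbors at an unbounded set of times. At each such balanced time the permanent $+1$ spin at $(u,i)$ sits on one of the two sides, and it is precisely this persistent asymmetry between the two signs that I expect to drive the contradiction. The heart of the argument is then a forcing step of the type used in Proposition~\ref{11}: I would exhibit a bounded local configuration around $v$ that recurs infinitely often (by the balancing of $(v,i)$ together with the fixedness of $C_u$), but from which a bounded sequence of admissible clock rings flips a suitable unstable neighbor so as to push $(v,i)$ off balance into strictly positive energy, i.e.\ creates for $(v,i)$ an energy-lowering flip. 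By Lemma~\ref{22} such a configuration would then generate energy-lowering flips infinitely often, and by Lemma~\ref{26} this forces it to occur only finitely often -- the desired contradiction. (Equivalently, the same mechanism should let one force all of $C_v$ to align with $C_u$ at $+1$ and remain stable, contradicting that $(v,i)$ is variable.)

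The main obstacle is that each $(v,i)$ has three lateral neighbors lying in columns other than $C_u$, and these are not under control; they may themselves belong to variable columns, so from an arbitrary balanced configuration the tipping move need not be available. I expect this to be resolved exactly as in Proposition~\ref{11}, by passing to a \emph{minimal recurrent cluster} of the relevant non-monochromatic columns adjacent to the fixed region and examining a corner vertex $w$ satisfying the analogue of \eqref{eq: v_condition}. At such a corner, minimality together with the stability repeatedly enforced through Lemma~\ref{22} pins down enough of the uncontrolled lateral spins that a bounded admissible sequence of flips is guaranteed to create the strict energy surplus; this local, finite case check is where the real work lies.

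Finally, the height symmetry of the triangle in $S_3$ and the global sign symmetry reduce the cases $i\in\{0,1,2\}$ and the two possible signs of $C_u$ to the single case treated above. Running the forcing argument there contradicts the assumption that $C_u$ flips finitely often while $C_v$ does not, and since $(\sigma(0),\omega)$ was arbitrary on an event of positive probability, this proves that $C_v$ flips finitely often whenever $C_u$ does.
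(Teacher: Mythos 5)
Your setup is sound and matches the paper's opening moves: reduce by symmetry to $C_u$ eventually positive, note that each vertex of $C_u$ is eventually stable, and use Lemma~\ref{lem: stability} to conclude that some vertex of $C_v$ has exactly three same-sign neighbors at an unbounded set of times, with the contradiction to be manufactured through Lemma~\ref{22} against Lemma~\ref{26}. But your argument stops exactly where the actual difficulty begins, and you say so yourself (``this local, finite case check is where the real work lies''). In the paper, that case check is the bulk of the proof: it first upgrades, via Proposition~\ref{11} and Lemma~\ref{22}, the recurrence of a balanced vertex in $C_v$ to the recurrence of the event that $C_v$ is \emph{monochromatic with an unstable spin}; it also shows the stronger statement that each vertex of $C_u$ eventually has at least two \emph{stable} positive neighbors outside $C_u$ (you keep only the weaker fact that $C_u$ itself is positive, which is not enough for the later steps). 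It then rules out the recurrent configurations through a chain of ``eventually absent'' lemmas -- Lemma~\ref{30} on adjacent same-sign unstable vertices, Lemma~\ref{31} on a $2\times 2$ block of columns, Lemma~\ref{32} on the positive/negative/monochromatic/unstable block -- each itself proved by multi-step refinements with Lemma~\ref{22}, followed by a two-case analysis ($B^+$ versus $B^-$) with several sub-cases. None of this is present in, or follows routinely from, your sketch.

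Moreover, the specific mechanism you propose to pin down the uncontrolled lateral spins -- ``passing to a minimal recurrent cluster \ldots and examining a corner vertex $w$ satisfying the analogue of \eqref{eq: v_condition}'' -- does not transfer from Proposition~\ref{11}. There, the existence of a \emph{finite} cluster possessing a corner came from the planar crossing argument of Lemma~\ref{lem: star_connection}, which confined the $1*$-cluster of the origin to the finite box $B$ away from $R\cup U$; that argument relied on the type-1/type-2 duality of terminal (static) configurations of columns that flip finitely often. In your setting the relevant object would be a cluster of non-fixating, or recurrently non-monochromatic, columns adjacent to the fixed region. There is no a priori bound forcing such a cluster to be finite -- whether non-fixating sites percolate is explicitly listed as an open problem in the introduction of this paper -- so no corner need exist; and even at a corner the spins are not static, so the stability bookkeeping used at the corner in Proposition~\ref{11} does not apply as stated. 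So this is a genuine gap: the skeleton is right, but the substantive content of the proposition (the analogues of Lemmas~\ref{30}, \ref{31} and \ref{32} and the ensuing case analysis) is missing, and the tool you propose in its place would fail.
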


%{\bf I think something like this should be in preliminary results. Do not want to mess what you have done.}
An event $A \subset \Omega_k$ is \emph{eventually absent} (or e. absent) if $\mathbb{P}(\sigma(t) \in A \text{ infinitely often})=0$.
\bl \label{30}
Let $v, w \in S_k$ be neighbors for $k \geq 2$. The event $\{v$ and $w$ are unstable$\} \cap \{\sigma_v=\sigma_w\}$ is e-absent.
\el

\begin{proof}
For a contradiction, suppose that with positive probability, the event that both $v$ and $w$ are unstable and $\sigma_v=\sigma_w$ occurs infinitely often. At any one of these times, $v$ has a chance to flip. If $v$ flips but no clocks assigned to any other vertex within distance 2 of $v$ ring beforehand, then $w$ would have at least 4 opposite sign neighbors. Therefore we can apply Lemma~\ref{22} to deduce that with positive probability, $-\sum_{z : \{w,z\} \in \mathcal{E}_k} \sigma_w\sigma_z > 0$ infinitely often. This contradicts part 1 of Lemma \ref{lem: stability}.
\end{proof}

The following two lemmas are used repeatedly in the proof. A column $C$ is called \emph{positive} if the spins of all its vertices equal $+1$ and called \emph{negative} if the spins are $-1$.

\bl \label{31}

For $x,y \in \mathbb{Z}$, let $A_{(x,y)} \subset \Omega_3$ be the event defined by the conditions
\begin{enumerate}
\item $C_{(x,y)}$ is positive but at least one of $C_{(x+1,y)}, C_{(x,y+1)}, C_{(x+1,y+1)}$ is not,
\item for some $i,j \in \{0, 1, 2\}, \sigma_{(x+1, y, i)} = \sigma_{(x, y+1, j)} = +1$ and
\item $\sigma_{(x+1, y+1, k)}=+1$ for some $k \in \{0,1,2\} \setminus \{i,j\}$. 
\end{enumerate}
(See Figure~\ref{fig: lem31}.) Then $A_{(x,y)}$ is e. absent.
\el

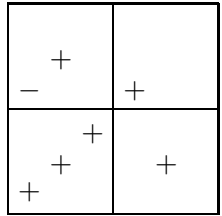
\begin{figure}[h]
\begin{center}
\setlength{\unitlength}{1.4cm}
\begin{picture}(2, 2.5)
\multiput(0,0)(0,1){3}{\line(1,0){2}}
\multiput(0,0)(1,0){3}{\line(0,1){2}}
%\multiput(.1, .15)(1,0){2}{$+$}\multiput(.4, .4)(0,1){2}{$+$}
%\multiput(.7,.7)(1,0){2}{$+$}\put(.7,1.7){$-$}
\put(.1,.15){$+$}
\put(.4,.4){$+$}
\put(.7,.7){$+$}
\put(.1,1.1){$-$}
\put(.4,1.4){$+$}
\put(1.4,.4){$+$}
\put(1.1,1.1){$+$}
%\multiput(.45, .45)(1,0){2}{$+$}\multiput(.75, .75)(0,1){2}{$+$}
%\put(.45, 1.45){$-$}\put(1.75, .75){$-$}
%\put(1.1, 1.15){$+$} \put(1.45, 1.45){$+$} \put(1.75, 1.75){$+$}
\end{picture}
\caption{Illustration of the event $A_{(x,y)}$ in Lemma~\ref{31} with $i=j=1,k=0$. The bottom left column is $C_{(x,y)}$. All unmarked spins are unspecified.}
\label{fig: lem31}
\end{center}
\end{figure}

\begin{remark}\label{rem: neg31}
By identical reasoning, Lemma~\ref{31} holds for the global flip (all positive spins become negative and vice-versa) of $A_{(x,y)}$.
\end{remark}

\begin{proof}
By way of contradiction, suppose that with positive probability, $A_{(x,y)}$ occurs infinitely often. When $A_{(x,y)}$ occurs, the vertices $(x+1, y, k)$ and $(x, y+1, k)$ have at least three positive neighbors each, so they are unstable. If the spins at these two vertices flip to $+1$ (without any other clocks ringing for vertices in $C_{(x,y)}, C_{(x+1,y)}, C_{(x,y+1)}$ or $C_{(x+1,y+1)}$) then all other spins of vertices in $C_{(x+1,y)}$ and $C_{(x,y+1)}$ have at least three positive neighbors, and can flip. Continuing, we see that there is a finite sequence of clock rings and spin flips that force $B_{(x,y)}$ to occur, where $B_{(x,y)}$ is the event such that $C_{(m,n)}$ is positive for $m=\{x, x+1\}$ and $n=\{y,y+1\}$. Therefore,
\[
\inf_{\sigma \in A_{(x,y)}} \mathbb{P} ( \sigma(t) \in B_{(x,y)} \text{ for some } t \in [0,1) \mid \sigma(0) = \sigma) >0\ .
\]
By Lemma~\ref{22}, $B_{(x,y)}$ occurs infinitely often with positive probability. Since this event is also absorbing (that is, $\mathbb{P}( \sigma(t) \in B_{(x,y)} \text{ for all } t \geq 0 \mid \sigma(0) \in B_{(x,y)})=1$), the event $A_{(x,y)}$ is e. absent, a contradiction.
\end{proof}

%\begin{corollary} \label{33}
%For $x,y \in \mathbb{Z}$, let $A_{(x,y)}'$ be the event that (a) $C_{(x,y)}$ and $C_{(x+1, y)}$ are positive, (b) for some $i,j\in \{0,1,2\}$ with $i \neq j$, $\sigma_{(x, y+1, i)}=\sigma_{(x+1, y+1, j)}=+1$ and (d) at least one of $C_{(x,y+1)}$ and $C_{(x+1,y+1)}$ is not positive. Then $A_{(x,y)}'$ is e-absent.
%\end{corollary}

%This follows by monotonicity from lemma \ref{31}.
\bl \label{32}
Let $A$ be the event that $C_{(1,1)}$ is positive, $C_{(2,2)}$ is negative, $C_{(1,2)}$ is monochromatic and $C_{(2,1)}$ contains an unstable vertex. Then $A$ is e. absent.
\el

\begin{figure}[h]
\begin{center}
\setlength{\unitlength}{1.4cm}
\begin{picture}(2, 2.5)
\multiput(0,0)(0,1){3}{\line(1,0){2}}
\multiput(0,0)(1,0){3}{\line(0,1){2}}
%\multiput(.1, .15)(1,0){2}{$+$}\multiput(.4, .4)(0,1){2}{$+$}
%\multiput(.7,.7)(1,0){2}{$+$}\put(.7,1.7){$-$}
\put(.1,.15){$+$}
\put(.4,.4){$+$}
\put(.7,.7){$+$}
\put(.1,1.15){$+$}
\put(.4,1.4){$+$}
\put(.7,1.7){$+$}
\put(1.1,.1){$u$}
\put(1.1,1.15){$-$}
\put(1.4,1.4){$-$}
\put(1.7,1.7){$-$}
%\multiput(.45, .45)(1,0){2}{$+$}\multiput(.75, .75)(0,1){2}{$+$}
%\put(.45, 1.45){$-$}\put(1.75, .75){$-$}
%\put(1.1, 1.15){$+$} \put(1.45, 1.45){$+$} \put(1.75, 1.75){$+$}
\end{picture}
\caption{Illustration of the event $A$ in Lemma~\ref{32}. The bottom left column is $C_{(1,1)}$ and $C_{(1,2)}$ is monochromatic (pictured here as $+1$). The column $C_{(2,1)}$ has an unstable spin (marked $u$) at vertex $(2,1,0)$.}
\label{fig: lem32}
\end{center}
\end{figure}
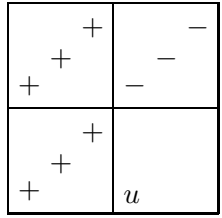

\begin{proof}
Suppose that with positive probability, $A$ occurs infinitely often. At least one of $A_0:=A\cap\{C_{(1,2)}\text{ is positive}\}$ and $A \cap \{C_{(1,2)}\text{ is negative}\}$ occurs infinitely often with positive probability. We will assume that $A_0$ does, as the following reasoning is identical in the other case. By part 1 of Lemma~\ref{lem: stability}, with probability one, any given vertex must have exactly three positive and three negative neighbors at any large time at which it is unstable. Using permutation invariance of the different levels of the slab, the event $A_1:=A_0\cap \{(2,1,0) \text{ has three positive and three negative neighbors}\}$ occurs infinitely often with positive probability. 
%Last, whenever $A_1$ occurs, $\sigma_{(2,1,0)}$ can flip to $+1$ (if it is $-1$), so by Lemma~\ref{22}, both $A_1^+ := A_1 \cap \{\sigma_{(2,1,0)} = +1\}$ and $A_1^- := A_1 \cap \{\sigma_{(2,1,0)}=-1\}$ occur infinitely often with positive probability.

We will consider two cases depending on the status of spins in $C_{(2,1)}$. To do so, we define 
\[
A_2^+ := A_1 \cap \{C_{(2,1)} \text{ is positive}\} \text{ and } A_2^- := A_1 \cap \{C_{(2,1)} \text{ is negative}\}\ .
\]
We claim that at least one of $A_2^+$ or $A_2^-$ occurs infinitely often with positive probability. By way of contradiction, assume this is false, so that almost surely, for all large times, if $A_2$ occurs, then two vertices of $C_{(2,1)}$ have spins opposite of that of the third. The third 
%{\bf third spin has three opposite sign neighbors and flips with positive probability. Hence all three spins of $C_{(2,1)}$ have the same sign i.o.}  
spin must be unstable and at large times has exactly three positive and three negative neighbors. It has a positive probability to flip, so Lemma~\ref{22} implies that almost surely, it will infinitely often, leaving $C_{(2,1)}$ monochromatic and this spin still unstable. Permutation invariance of the levels of the slab implies that $A_2^+ \cup A_2^-$ occurs infinitely often with positive probability.

%Note that at any time  $C_{(2,1)}$ contains  either  at least two positive sites or at least  two negative sites. The remaining vertex has at least three positive (or negative) neighbors,  hence  $C_{(2,1)}$ is monochromatic i.o. We consider two cases, depending on the sign of $C_{(2,1)}$ at those times.
{\bf Case 1}. $A_2^+$ occurs infinitely often with positive probability. When $A_2^+$ occurs, $C_{(2,1)}$ is positive and $(2,1,0)$  has equal number of positive and negative neighbors, so the spins at $(3,1,0)$ and $(2,0,0)$ must be negative. If one of the vertices of $C_{(2,2)}$ is unstable then it may flip, leading to a configuration in $C_{(1,1)}, C_{(2,1)}, C_{(1,2)}$ and $C_{(2,2)}$ from the event $A_{(1,1)}$ in Lemma~\ref{31}. Therefore, by Lemma~\ref{22}, the event $A_3^+:= A_2^+ \cap \{\text{all vertices of }C_{(2,2)}\text{ are stable}\}$ occurs infinitely often with positive probability. This means that when $A_3^+$ occurs, the column $C_{(3,2)}$ is negative. This results in the sign distribution pictured in Figure~\ref{fig: 32fig_a}.

\begin{figure}[h]
\begin{center}
\setlength{\unitlength}{1.4cm}
\begin{picture}(3,3.5)
\multiput(0,0)(1,0){4}{\line(0,1){3}}
\multiput(0,0)(0,1){4}{\line(1,0){3}}
\multiput(.2, 1.2)(0,1){2}{$+$} \multiput(.45, 1.45)(0,1){2}{$+$} \multiput(.7, 1.7)(0,1){2}{$+$}
\put(1.2, 1.2){$+$} \put(1.45, 1.45){$+$} \put(1.7, 1.7){$+$}
\put(1.2, .2){$-$}
\put(2.2, 1.2){$-$}
\multiput(1.2, 2.2)(1,0){2}{$-$}
\multiput(1.45, 2.45)(1,0){2}{$-$}\multiput(1.7, 2.7)(1,0){2}{$-$}
%\multiput(1.45, .45)(1,0){2}{$+$} \multiput(1.7, .7)(1,0){2}{$+$}
\put(1.2, .2){$-$}\put(2.2, 1.2){$-$}
\end{picture}
\caption{Illustration of the event $A_3^+$ in Lemma~\ref{32}. All unmarked spins are unspecified. The middle left box represents the column $C_{(1,1)}$ and the bottom vertex of the middle box is unstable.}
\label{fig: 32fig_a}
\end{center}
\end{figure}

Now we claim that almost surely, for all large times at which $A_3^+$ occurs, the spins at $(3,1,1)$ and $(3,1,2)$ must be $+1$. For suppose that this is false; that is, with positive probability, infinitely often both $A_3^+$ occurs and at least one of these spins is $-1$. Because $(2,1,0)$ is unstable, it can flip to $-1$, and an application of Lemma~\ref{22} shows that with positive probability, the columns $C_{(2,1)}, C_{(3,1)}, C_{(2,2)}$ and $C_{(3,2)}$ would have a configuration described in Remark~\ref{rem: neg31} infinitely often, a contradiction. This means that $A_4^+ := A_3^+ \cap \{ \sigma_{(3,1,1)}=\sigma_{(3,1,2)}=+1\}$ occurs infinitely often with positive probability.

The spins at $(3,1,1)$ and $(3,1,2)$ now have at least two negative neighbors. If they have at least three, then they can flip, forcing one of $(2,1,1)$ or $(2,1,2)$ to be $+1$ but unstable. If this occurred infinitely often with positive probability, then it would contradict Lemma~\ref{30}, since $(2,1,0)$ is a neighboring $+1$ unstable vertex. Therefore $A_5^+= A_4^+  \cap \{\sigma_{(3,0,1)} = \sigma_{(3,0,2)} = +1 \}$ occurs infinitely often with positive probability.

We now invoke Lemma~\ref{31}. If, with positive probability, for infinitely many of the times at which $A_5^+$ occurs, either the spin at $(2,0,1)$ or $(2,0,2)$ were equal to $+1$ then the columns $C_{(2,1)}, C_{(3,1)}, C_{(2,0)}$ and $C_{(3,0)}$ would have a configuration described in that lemma. Therefore $A_6^+ = A_5^+ \cap \{C_{(2,0)} \text{ is negative}\}$ occurs infinitely often with positive probability. But now if $\sigma_{(2,1,0)}$ flips to $-1$, the other two spins in $C_{(2,1)}$ are $+1$ and unstable, and Lemma~\ref{22} says this will occur infinitely often with positive probability, contradicting Lemma~\ref{30}.

{\bf Case 2.} $A_2^-$ occurs infinitely often with positive probability. 
%Then, as the case $\sigma_{(2,1,1)}=-1,~\sigma_{(2,1,0)}=\sigma_{(2,1,2)}=+1$ has been ruled out, $A_2''' : = A_2 \cap \{C_{(2,1)} \text{ is negative}\}$ occurs infinitely often with positive probability. 
We first claim that almost surely, for all large times at which $A_2^-$ occurs, the spins at $(2,1,1)$ and $(2,1,2)$ must each have at least three negative neighbors not contained in $C_{(2,1)}$. To see this, suppose for a contradiction that with positive probability, infinitely often both $A_2^-$ occurs and one of these spins (by symmetry, we can say $\sigma_{(2,1,1)}$) has at most two negative neighbors not contained in $C_{(2,1)}$. Because $\sigma_{(2,1,0)}$ is unstable, it can flip to $+1$ and by Lemma~\ref{22}, this will occur infinitely often almost surely. After this flip, $\sigma_{(2,1,1)}$ is then unstable and negative, so can flip to $+1$. This leaves $\sigma_{(2,1,2)}$ negative and unstable (and therefore, for all large times, with exactly three positive and three negative neighbors). After $\sigma_{(2,1,2)}$ flips to $+1$, then $C_{(2,1)}$ is positive with $\sigma_{(2,1,2)}$ unstable. Using Lemma~\ref{22} and permuting levels 0 and 2 in the slab shows that $A_2^+$ occurs infinitely often with positive probability, contradicting case 1.

%By symmetry we may assume that the spin in question is at $(2,1,1)$. Since $\sigma_{(2,1,0)}=+1$, $\sigma_{(2,1,1)}$ has three positive neighbors, so is unstable. We can then permute levels 0 and 1 in the slab to force the event $A_2''$ (from case 2) to occur. Lemma~\ref{22} now implies that $A_2''$ occurs infinitely often with positive probability, contradicting the conclusion of case 2.

Therefore $A_3^- : = A_2^- \cap \{C_{(3,1)} \text{ and } C_{(2,0)} \text{ are negative in levels } 1 \text{ and } 2\}$ occurs infinitely often with positive probability. Further, as $\sigma_{(2,1,0)}$ is unstable on $A_3^-$ and already has three negative neighbors, the spins at $(3,1,0)$ and $(2,0,0)$ must be $+1$. This results in the sign distribution displayed in Figure~\ref{fig: 32fig_c}.

\begin{figure}[h]
\begin{center}
\setlength{\unitlength}{1.4cm}
\begin{picture}(3,3.5)
\multiput(0,0)(1,0){4}{\line(0,1){3}}
\multiput(0,0)(0,1){4}{\line(1,0){3}}
\multiput(.2, 1.2)(0,1){2}{$+$} \multiput(.45, 1.45)(0,1){2}{$+$} \multiput(.7, 1.7)(0,1){2}{$+$}
\put(1.2, 1.2){$-$} \put(1.45, 1.45){$-$} \put(1.7, 1.7){$-$}
%\put(1.2, .2){$-$}
\put(2.2, 1.2){$+$} \put(2.45, 1.45){$-$} \put(2.7,1.7){$-$} 
\put(1.2,.2){$+$} \put(1.45,.45){$-$} \put(1.7,.7){$-$}
\put(1.2, 2.2){$-$} \put(1.45, 2.45){$-$}\put(1.7, 2.7){$-$}
%\multiput(1.45, .45)(1,0){2}{$+$} \multiput(1.7, .7)(1,0){2}{$+$}
%\put(1.2, .2){$-$}\put(2.2, 1.2){$-$}
\end{picture}
\caption{Illustration of the event $A_3^-$ in Lemma~\ref{32}. All unmarked spins are unspecified. The middle left box represents the column $C_{(1,1)}$ and the bottom vertex of the middle box is unstable.}
\label{fig: 32fig_c}
\end{center}
\end{figure}

On the event $A_3^-$, if there is any negative spin in $C_{(3,2)}$, then the columns $C_{(2,1)}, C_{(2,2)}, C_{(3,1)}$ and $C_{(3,2)}$ would have a configuration described in Remark~\ref{rem: neg31}, so almost surely, for all large times at which $A_3^-$ occurs, the column $C_{(3,2)}$ must be positive. However, $\sigma_{(2,1,0)}$ can flip to $+1$, leaving $\sigma_{(2,2,0)}$ unstable, and flipping to $+1$, leaving both $\sigma_{(2,2,1)} $ and $\sigma_{(2,2,2)}$ unstable and negative. Lemma~\ref{22} implies this will occur infinitely often with positive probability and this contradicts Lemma~\ref{30}.
\end{proof}

\begin{proof}[Proof of Proposition~\ref{12}] 
By translation invariance and symmetry we can take $u=(1,1)$ and $v=(2,1)$. Since $p=1/2$, it is enough to show that almost surely, when $C_{(1,1)}$ fixates to $+1$, then $C_{(2,1)}$ also fixates (either to $+1$ or to $-1$).

We first prove that almost surely, if $C_{(1,1)}$ fixates to $+1$ then for all large times, each vertex in this column must have at least 2 stable neighbors outside $C_{(1,1)}$ with spin $+1$. If this were false, then with positive probability there would be infinitely many times at which some spin (say at $(1,1,0)$) has at least three neighbors outside $C_{(1,1)}$ which are either not positive or not stable. Note that none of these neighbors are neighbors of each other, so the unstable ones will all be unstable even if any of them flip. Since they have a positive probability to flip, Lemma~\ref{22} implies that with positive probability, on the event that $C_{(1,1)}$ fixates to $+1$, the spin at $(1,1,0)$ will have at least three negative neighbors. Another application of Lemma~\ref{22} implies that this spin will flip infinitely often, a contradiction since it fixates.

Therefore $B$ occurs infinitely often with positive probability, where
\[
B = \left\{\begin{array}{c}
C_{(1,1)} \text{ is positive and each of its spins have at least } \\
\text{ two positive stable neighbors outside } C_{(1,1)} \end{array} \right\}\ .
\]

We next claim that almost surely, if $C_{(1,1)}$ fixates to $+1$ then $C_{(2,1)}$ is positive infinitely often or negative infinitely often. If this is not the case, then with positive probability, $C_{(1,1)}$ fixates to $+1$ and $C_{(2,1)}$ has exactly two like spins for all large times. By Proposition~\ref{11}, $C_{(2,1)}$ cannot fixate, so it hs exactly two positive spins infinitely often. But then the negative spin is unstable, and Lemma~\ref{22} implies that $C_{(2,1)}$ is positive infinitely often, a contradiction.

If $C_{(2,1)}$ does not fixate (with positive probability) then it must flip infinitely often, so the previous paragraph implies that almost surely, if $C_{(1,1)}$ fixates then infinitely often $C_{(2,1)}$ will both be monochromatic and have an unstable spin. By spatial symmetry we may assume that $\sigma_{(2,1,0)}$ is unstable. So far we have shown that at least one of $B^+$ or $B^-$ occurs infinitely often with positive probability, where $B^+ = B \cap \{C_{(2,1)} \text{ is positive and } \sigma_{(2,1,0)} \text{ is unstable}\}$ and $B^-$ is the same event with positive replaced by negative.

{\bf Case 1.} $B^+$ occurs infinitely often with positive probability. Because $\sigma_{(2,1,0)}$ is unstable and already has three positive neighbors, almost surely for all large times at which $B^+$ occurs, the spins at $(2,2,0), (3,1,0)$ and $(2,0,0)$ must be negative. See Figure~\ref{fig: prop_4_1_a}.

\begin{figure}[h]
\begin{center}
\begin{picture}(3,4)
\multiput(0,0)(1,0){4}{\line(0,1){3}}
\multiput(0,0)(0,1){4}{\line(1,0){3}}
\put(.2, 1.2){$+$} \put(.45, 1.45){$+$} \put(.7, 1.7){$+$}
\put(1.2, 1.2){$u$} \put(1.45, 1.45){$+$} \put(1.7, 1.7){$+$}
\put(1.2, .2){$-$}\put(2.2, 1.2){$-$}\put(1.2, 2.2){$-$}
\end{picture}
\end{center}
\caption{Illustration of the event $B^+$ at large times, where the middle left box represents $C_{(1,1)}$. The neighbors of the unstable spin at $(2,1,0)$ (labeled $u$) outside of $C_{(1,1)}$ and $C_{(2,1)}$ must be negative.} 
\label{fig: prop_4_1_a}
\end{figure}

Two neighbors of $(1,1,0)$ outside of $C_{(1,1)}$ must be positive and stable, so at least one must be in the set $\{\sigma_{(1,2,0)}, \sigma_{(1,0,0)}\}$. By spatial symmetry we may assume that $\sigma_{(1,2,0)}$ is positive and stable on $B^+$ infinitely often with positive probability. This means that at least one other spin at a vertex in $C_{(1,1)}$ is positive. The remaining spin has at least 3 positive neighbors and by Lemma~\ref{22} will be $+1$ infinitely often. This means that with positive probability, infinitely often on $B^+$, the column $C_{(1,2)}$ is positive. If there is a positive spin in $C_{(2,2)}$, then $C_{(1,1)}, C_{(1,2)}, C_{(2,1)}$ and $C_{(2,2)}$ contain a configuration described in Lemma~\ref{31}, so for all large times $C_{(2,2)}$ is negative. Lemma~\ref{32} then implies that $B^+$ is e. absent.
 
{\bf Case 2.} $B^-$ occurs infinitely often with positive probability. When $B^-$ occurs, $(2,1,0)$ has three positive neighbors, so either $(2,0,0)$ or $(2,2,0)$ (or both) are positive. By symmetry we may assume that let $B^- \cap \{\sigma_{(2,2,0)}=+1\}$ occurs infinitely often with positive probability. Because each spin in $C_{(1,1)}$ has at least two positive stable neighbors outside of $C_{(1,1)}$, either $C_{(1,2)}$ or $C_{(1,0)}$ must contains at least two positive spins. Just as before, if $C_{(1,2)}$ contains two positive spins, the other has at least three positive neighbors and Lemma~\ref{22} implies the columns will be positive infinitely often. The same holds for $C_{(1,0)}$. These two cases will complete the proof below. We first consider the case that $B^- \cap \{\sigma_{(2,2,0)}=+1, C_{(1,2)} \text{ is positive}\}$ occurs infinitely often, and the configuration is shown in Figure~\ref{fig: prop_4_1_b}. 
\begin{figure}[h]
\begin{center}
\begin{picture}(3,4)
\multiput(0,0)(1,0){4}{\line(0,1){3}}
\multiput(0,0)(0,1){4}{\line(1,0){3}}
\put(.2, 1.2){$+$} \put(.45, 1.45){$+$} \put(.7, 1.7){$+$}
\put(.2, 2.2){$+$} \put(.45, 2.45){$+$} \put(.7, 2.7){$+$}
\put(1.2, 1.2){$u$} \put(1.45, 1.45){$-$} \put(1.7, 1.7){$-$}
\put(1.2,2.2){$+$}
%\put(1.2, .2){$-$}\put(2.2, 1.2){$-$}\put(1.2, 2.2){$-$}
\end{picture}
\end{center}
\caption{Illustration of the event $B^-$ in the case that $C_{(1,2)}$ is positive. The middle left box represents $C_{(1,1)}$ and the spin $\sigma_{(2,2,0)}$ is positive.} 
\label{fig: prop_4_1_b}
\end{figure}
By lemma \ref{32} the positive vertex $(2,2,0)$ must be stable, hence $C_{(2,2)}$ must contain at least two positive spins. But then the unstable spin $\sigma_{(2,1,0)}$ can flip to $+1$, giving a configuration in these four columns described in Lemma~\ref{31}. This occurs infinitely often with positive probability, a contradiction.

The other possibility is that $B^- \cap \{\sigma_{(2,2,0)} \cap C_{(1,0)} \text{ is positive}\}$ occurs infinitely often with positive probability. If the spin $\sigma_{(2,0,0)}$ is positive at infinitely many of these times, then we have a configuration symmetrical to that in the previous paragraph, leading to a contradiction. Otherwise $\sigma_{(2,0,0)}$ is negative at all such large times. The configuration is displayed in Figure~\ref{fig: prop_4_1_c}.
\begin{figure}[h]
\begin{center}
\begin{picture}(3,4)
\multiput(0,0)(1,0){4}{\line(0,1){3}}
\multiput(0,0)(0,1){4}{\line(1,0){3}}
\put(.2, 1.2){$+$} \put(.45, 1.45){$+$} \put(.7, 1.7){$+$}
\put(.2, .2){$+$} \put(.45, .45){$+$} \put(.7, .7){$+$}
\put(1.2, 1.2){$u$} \put(1.45, 1.45){$-$} \put(1.7, 1.7){$-$}
\put(1.2,.2){$-$}
%\put(1.2, .2){$-$}\put(2.2, 1.2){$-$}\put(1.2, 2.2){$-$}
\end{picture}
\end{center}
\caption{Illustration of the event $B^-$ in the case that $C_{(1,0)}$ is positive. The middle left box represents $C_{(1,1)}$ and the spin $\sigma_{(2,0,0)}$ is positive.} 
\label{fig: prop_4_1_c}
\end{figure}
Again, if $C_{(2,0,0)}$ has another vertex with a positive spin infinitely often, Lemma~\ref{31} gives a contradiction after $\sigma_{(2,1,0)}$ flips to $+1$, so $C_{(2,0,0)}$ is negative for all large times. Applying Lemma \ref{32} to $C_{(1,0)}, C_{(1,1)}, C_{(2,1)}$ and $C_{(2,0)}$, we obtain a contradiction.

\begin{comment}

\begin{picture}(3,4)
\multiput(0,0)(1,0){4}{\line(0,1){3}}
\multiput(0,0)(0,1){4}{\line(1,0){3}}
\multiput(.2, 1.2)(0,1){2}{$+$} \multiput(.45, 1.45)(0,1){2}{$+$} \multiput(.7, 1.7)(0,1){2}{$+$}
\put(1.2, 1.2){$v$} 
\multiput(1.45, 1.45)(1,0){2}{$-$} \multiput(1.7, 1.7)(1,0){2}{$-$}
%\multiput(1.2, .2)(1,0){2}{$+$}\put(2.2, 1.2){$+$}
\multiput(1.2, 2.2)(1,0){2}{$+$}
\multiput(1.45, 2.45)(1,0){2}{$-$}\multiput(1.7, 2.7)(1,0){2}{$-$}
%\multiput(1.45, .45)(1,0){2}{$-$} \multiput(1.7, .7)(1,0){2}{$-$}
\put(2.2, 1.2){$-$}
\multiput(2.2, 1.2)(1,0){1}{$+$}
\multiput(2.45, 1.45)(1,0){1}{$+$}\multiput(2.7, 1.7)(1,0){1}{$+$}
\end{picture}

\end{comment}

\end{proof}

\section{Proof of Theorem~\ref{02}}\label{sec: thm_3}

For any $z \in \mathbb{Z}^2$, 
\[
\mathbb{P}(C_z \text{ fixates}) \geq [\min\{p, (1-p)\}]^{12}\ ,
\]
since the initial configuration may have all spins in $C_z, C_{z+(1,0)}, C_{z+(0,1)}$ and $C_{z+(1,1)}$ of the same sign. By translation-ergodicity, almost surely there exist columns that fixate. If not all columns fixate, we may almost surely find neighboring columns, one which fixates and one which doesn't. By countability, there exist neighboring columns $C_u$ and $C_v$ that have positive probability for $C_u$ to fixate but for $C_v$ not to and this contradicts Proposition~\ref{11}.
%(and hence $C_{(x,y)}$ fixates) is positive. By Proposition \ref{11} $C_{(x,y)}$ is monochromatic. 
%By translation ergodicity, for any $(\tilde{x}, \tilde{y})$, $C_{(\tilde{x}, \tilde{y})}$ is connected to a fixed $C_{(x,y)}$ by a finite string of neighbor columns. Hence by Proposition \ref{12} $C_{(\tilde{x}, \tilde{y})}$ fixates.
\qed

\section{ Proof of Theorem \ref{03}}\label{sec: thm_4}

%{\bf  the single blinker can not be constructed. I have the argument. Should we put it in?}

%The diagram below has four adjacent blinker sites. 

%Although in $S_k$ for $k\geq 5$ singular blinkers occur with positive probability, in $S_4$ the minimal number of adjacent blinkers is $4$. Furthermore, the columns containing blinkers are adjacent to fixed columns of  all three types.

%In the diagram the thick line shows columns with blinkers; medium line the fixed columns with one positive vs. three negative sites. The blinkers captured at 'all negative' stage; they flip right to left from $-1$ to $+1$ as denoted by the numbering. Once all blikers have flipped from $-$ to $+$ they flip back in reverse order.

\begin{figure}[h]
\begin{center}
\setlength{\unitlength}{1.2cm} 
\begin{picture}(10,10)
\multiput(-2,0)(1,0){15}{\line(0,1){11}}
\multiput(-2,0)(0,1){12}{\line(1,0){14}}

\multiput(.2, 2.2) (1,0) {6}{$+$}
\multiput(.2, 2.7) (1,0) {6}{$+$}
\multiput(.7, 2.2) (1,0) {2}{$+$}\multiput(.7, 2.7) (1,0) {2}{$+$}
\multiput(4.7, 2.7) (1,0) {6}{$+$}\multiput(4.7, 2.2) (1,0) {2}{$+$}
\multiput(6.2, 2.7) (1,0) {4}{$+$}
\multiput(8.2, 2.2) (1,0) {2}{$+$}\multiput(8.7, 2.2) (1,0) {2}{$+$}

\multiput(.2, 3.2) (1,0) {6}{$+$}
\multiput(.2, 3.7) (1,0) {6}{$+$}
\multiput(.7, 3.2) (1,0) {2}{$+$}\multiput(.7, 3.7) (1,0) {2}{$+$}
\multiput(4.7, 3.7) (1,0) {6}{$+$}\multiput(4.7, 3.2) (1,0) {2}{$+$}
\multiput(6.2, 3.7) (1,0) {4}{$+$}
\multiput(8.2, 3.2) (1,0) {2}{$+$}\multiput(8.7, 3.2) (1,0) {2}{$+$}

\multiput(2.7, .2)(0,1){11}{$-$}\multiput(2.7, .7)(0,1){11}{$-$}
\multiput(3.7, .2)(0,1){11}{$-$}\multiput(3.7, .7)(0,1){11}{$-$}
\multiput(6.7, .2)(0,1){11}{$-$}\multiput(6.2, .2)(0,1){11}{$-$}
\multiput(7.7, .2)(0,1){11}{$-$}\multiput(7.2, .2)(0,1){11}{$-$}

\multiput(2.2, .2)(0,1){11}{$-$}\multiput(2.2, .7)(0,1){2}{$-$}
\multiput(3.2, .2)(0,1){11}{$-$}\multiput(3.2, .7)(0,1){2}{$-$}
\multiput(6.7, .7)(0,1){11}{$-$}\multiput(6.2, .7)(0,1){2}{$-$}
\multiput(7.7, .7)(0,1){11}{$-$}\multiput(7.2, .7)(0,1){2}{$-$}

\multiput(-1.8, 4.2)(0,1){3}{$-$}\multiput(-1.3, 4.2)(0,1){3}{$-$}
\multiput(-1.8, 4.7)(0,1){3}{$-$}\multiput(-1.3, 4.7)(0,1){3}{$-$}
\multiput(-.8, 4.2)(0,1){3}{$-$}\multiput(-.3, 4.2)(0,1){3}{$-$}
\multiput(-.8, 4.7)(0,1){3}{$-$}\multiput(-.3, 4.7)(0,1){3}{$-$}

\multiput(.2, 4.2)(1,0){8}{$-$}\multiput(.7, 4.2)(1,0){12}{$-$}
\multiput(.2, 4.7)(1,0){10}{$+$}

\multiput(.2, 5.7)(0,1){4}{$+$}\multiput(.7, 4.7)(0,1){5}{$+$}
\multiput(1.2, 5.7)(0,1){4}{$+$}\multiput(1.7, 4.7)(0,1){5}{$+$}

\multiput(1.2, 7.2)(0,1){2}{$+$}\multiput(1.7, 7.2)(0,1){2}{$+$}
\multiput(.2, 7.2)(0,1){2}{$+$}\multiput(.7, 7.2)(0,1){2}{$+$}

\multiput(1.2, 5.2)(0,1){2}{$-$}\multiput(1.7, 5.2)(0,1){2}{$-$}
\multiput(.2, 5.2)(0,1){2}{$-$}\multiput(.7, 5.2)(0,1){2}{$-$}

\multiput(4.7, 4.7)(1,0){8}{$-$}\multiput(10.7, 4.7)(1,0){2}{$-$}
\multiput(8.2, 4.2)(1,0){2}{$+$}\multiput(10.2, 4.7)(1,0){2}{$-$}\multiput(8.2, 4.2)(1,0){4}{$-$}
\multiput(2.2, 5.7)(1,0){8}{$+$}\multiput(8.2, 5.2)(1,0){2}{$+$}

\multiput(8.7, 5.7)(1,0){4}{$-$}\multiput(8.7, 5.2)(1,0){4}{$-$}
\multiput(10.2, 5.7)(1,0){2}{$-$}\multiput(10.2, 5.2)(1,0){2}{$-$}

\multiput(8.2, 6.2)(0,1){3}{$+$}\multiput(8.7, 6.2)(0,1){3}{$+$}
\multiput(8.2, 6.7)(0,1){3}{$+$}\multiput(8.7, 6.7)(0,1){3}{$+$}
\multiput(9.2, 6.2)(0,1){3}{$+$}\multiput(9.7, 6.2)(0,1){3}{$+$}
\multiput(9.2, 6.7)(0,1){3}{$+$}\multiput(9.7, 6.7)(0,1){3}{$+$}

\multiput(2.2, 9.7)(0,1){2}{$-$}\multiput(3.2, 9.7)(0,1){2}{$-$}
\multiput(6.2, 9.7)(0,1){2}{$-$}\multiput(7.2, 9.7)(0,1){2}{$-$}

\multiput(4.2, 7.2)(0,1){2}{$+$}\multiput(4.2, 7.7)(0,1){2}{$+$}
\multiput(4.7, 7.2)(0,1){2}{$+$}\multiput(4.7, 7.7)(0,1){2}{$+$}
\multiput(5.2, 7.2)(0,1){2}{$+$}\multiput(5.2, 7.7)(0,1){2}{$+$}
\multiput(5.7, 7.2)(0,1){2}{$+$}\multiput(5.7, 7.7)(0,1){2}{$+$}

\multiput(6.2, 6.7)(0,1){3}{$+$}\multiput(7.2, 6.7)(0,1){3}{$+$}
\multiput(6.7, 7.7)(0,1){2}{$+$}\multiput(7.7, 7.7)(0,1){2}{$+$}

\multiput(2.2, 6.7)(0,1){3}{$+$}\multiput(3.2, 6.7)(0,1){3}{$+$}
\multiput(2.2, 7.2)(0,1){2}{$+$}\multiput(3.2, 7.2)(0,1){2}{$+$}

\multiput(4.2, 6.7)(1,0){2}{$+$}
\multiput(4.2, 6.2)(1,0){2}{$-$}
\multiput(4.7, 6.2)(1,0){2}{$-$}
\multiput(4.7, 6.7)(1,0){2}{$-$}

\multiput(4.2, 5.2)(1,0){2}{$-$}
\multiput(4.7, 5.2)(1,0){2}{$-$}
\multiput(4.7, 5.7)(1,0){2}{$-$}

\linethickness{.05 cm}
\multiput(2,4)(0,3){2}{\line(1,0){6}}
\multiput(2,4)(6,0){2}{\line(0,1){3}}

\linethickness{.09 cm}
\multiput(4,6)(0,1){2}{\line(1,0){4}}
\multiput(4,6)(4,0){2}{\line(0,1){1}}
\put(7.5, 6.4){$1$}
\put(6.5, 6.4){$2$}
\put(5.5, 6.4){$3$}
\put(4.5, 6.4){$4$}
\end{picture}
\end{center}
\caption{Construction showing existence of non-fixating sites within the bold lines. Each unit box represents a column and the spins in a box begin with level 3 in the top left and proceed counter-clockwise to level 0 in the top right. All specified spins outside thick bold lines are fixed. The level 0 spins in the thick bold lines begin $-1$ and flip infinitely often.}
\label{fig: 4_case}
\end{figure}

Here we will show that $S_4$ does not fixate. This is a proof by example and is illustrated in Figure~\ref{fig: 4_case}. The notation used in the figure is as follows. Each unit square represents a column $C_{(x,y)}$ in $S_4$. The spin of vertex $(x,y,3)$ is shows at the top-left of the square, with $(x,y,2), (x,y,1)$ and $(x,y,0)$ proceeding counter-clockwise. With $C_{(0,0)}$ the column at the bottom left of the figure, let $A_{(0,0)}$ be the event (in $\Omega_4$) that all spins in the box $[0,13]\times [0,11]\times \{0, \ldots, 3\}$ have values as shown in Figure~\ref{fig: 4_case} (with blank spins unspecified). The reader may check that (a) all sites within the medium-line box (outside the bold box) are fixed with one positive and three negative spins, (b) all specified spins outside the medium-line box are fixed and (c) the spins in the bold box are fixed except for those with third coordinate 0 (that is, those pictured in the upper right of the unit boxes) flip infinitely often. The flipping spins begin as all with value $-1$ and flip right to left from $-1$ to $+1$ as denoted by the numbering. Once they have flipped from $-1$ to $+1$ they flip back in the reverse order.

The event $A_{(0,0)}$ has positive probability and by translation ergodicity, almost surely some translate of it occurs. So with probability one, there exist spins which flip infinitely often.

%The main property we will use is that, unlike in lower thickness slabs, fixed columns need not be monochromatic. 

\appendix
\section{$S_2$ fixation under periodic boundary conditions}
In this appendix we give an alternative proof (to the one in \cite{DKNS}) of fixation in $S_2$ with periodic boundary conditions. As the arguments follow the same lines as those presented in this paper, we keep the proofs concise. We use a similar notation to the $S_3$ slab; the column $C_{(x,y)}$ consists of the pair of spins with the first two coordinates $(x,y)$. $C_{(x,y)}$ is positive (negative) if both of its spins are positive (negative). Note that, due to the boundary condition, the edge between $(x,y,0)$ and $(x,y,1)$ counts twice in the energy computation \eqref{eq: energy} of either site.

%We are now ready to give an alternative proof of Theorem \ref{01} for $k=2$. This proof is more elementary than the one given in \ref{} as it does not rely on the spanning cluster of fixed columns.
The proof structure is identical to the $S_3$ case and directly combines analogous Propositions \ref{71} and \ref{72}.
As before, the first proposition shows that fixed columns are monochromatic. The proof is simple and is contained in \cite{DKNS}.
\begin{proposition} \label{71}
With probability one, if a column in $S_2$ flips finitely often then it is eventually monochromatic.
\end{proposition}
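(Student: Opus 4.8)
The plan is to argue by contradiction, in parallel with the $S_3$ case, but the special geometry of $S_2$ (periodic) makes the argument short. So I would suppose that with positive probability some column, say $C_{(0,0)}$, flips finitely often yet is not eventually monochromatic. Since both of its spins then flip finitely often, part 2 of Lemma~\ref{lem: stability} shows that each is stable at all large times; and since the column is not eventually monochromatic, its terminal state is non-monochromatic, so we may assume without loss of generality that $\sigma_{(0,0,0)} = +1$ and $\sigma_{(0,0,1)} = -1$ at all large $t$, the reversed orientation being handled identically. The key structural observation is that under periodic boundary conditions the vertical edge between the two vertices of a column is doubled, so each vertex of a non-monochromatic column sees its partner as an opposite-sign neighbor with multiplicity $2$. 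As each vertex of $S_2$ has six neighbors counted with multiplicity (four planar, one doubled vertical), a vertex of such a column is stable only if all four of its planar neighbors carry its own sign.

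I would then run a propagation argument. Applying the observation to $C_{(0,0)}$: at every large time, stability of $(0,0,0) = +1$ forces its four planar (level-$0$) neighbors to be $+1$, and stability of $(0,0,1) = -1$ forces its four planar (level-$1$) neighbors to be $-1$. Since this holds at all large $t$, each of these eight neighbor spins is eventually constant; in particular every column adjacent to $C_{(0,0)}$ is eventually in the same orientation $(+,-)$ and flips finitely often. By part 2 of Lemma~\ref{lem: stability} again, the vertices of those neighboring columns are then themselves eventually stable, so the property \emph{``flips finitely often and is eventually stable with orientation $(+,-)$''} passes from any column to each of its four neighbors. Induction over the connected graph $\mathbb{Z}^2$ shows that, on the bad event, the entire slab is eventually frozen in the rigid configuration $\xi^+$ with all level-$0$ spins $+1$ and all level-$1$ spins $-1$; the global-flip case yields instead $\xi^-$. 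Writing $G^+$ and $G^-$ for these two events, the bad event is contained in $G^+ \cup G^-$.

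Finally I would derive the contradiction from translation-ergodicity. Both $G^+$ and $G^-$ are invariant under the $\mathbb{Z}^2$-translations of the model, so by ergodicity each has probability $0$ or $1$; they are disjoint, and at $p = 1/2$ the global spin-flip symmetry of $\mathbb{P}$ interchanges them, forcing $\mathbb{P}(G^+) = \mathbb{P}(G^-)$ and hence (since they cannot both equal $1$) $\mathbb{P}(G^+) = \mathbb{P}(G^-) = 0$; the reduction to $p = 1/2$ used elsewhere then covers general $p \in (0,1)$. Thus $\mathbb{P}(G^+ \cup G^-) = 0$, so the bad event has probability zero, as claimed. I expect the propagation step to be the delicate point to write carefully: one must verify not merely that the forced neighbor spins momentarily match the orientation $(+,-)$, but that they are \emph{eventually frozen}, which is what licenses the induction to spread the rigid state across all of $\mathbb{Z}^2$. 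This is exactly why the stability constraint must be invoked at \emph{every} large time rather than along a recurrent subsequence; without it one could not pass from a single column to the global configuration needed for the ergodic argument.
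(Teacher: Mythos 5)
Your core argument is sound, and it is worth noting that it is genuinely self-contained: the paper itself gives no proof of Proposition~\ref{71}, deferring to \cite{DKNS}. Your key computation is correct: for a vertex of a non-monochromatic column in periodic $S_2$, the doubled vertical edge contributes $-2$ to $\sum_w \sigma_v\sigma_w$, so stability forces the planar contribution $2s-4$ (with $s$ the number of agreeing planar neighbors) to exceed $2$, i.e.\ $s=4$. Invoking part 2 of Lemma~\ref{lem: stability} at \emph{all} large times (not just along a subsequence) to freeze the eight neighboring spins, and then inducting across the connected graph $\mathbb{Z}^2$, correctly places the bad event inside $G^+\cup G^-$ up to null sets; the countable, non-uniform choice of freezing times is harmless since $G^\pm$ are defined spin-by-spin.

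The genuine gap is the final step. Spin-flip symmetry of $\mathbb{P}_p$ holds \emph{only} at $p=1/2$, and the ``reduction to $p=1/2$ used elsewhere'' does not exist: the paper claims all results for every $p\in(0,1)$ directly, and the phrase ``Since $p=1/2$'' in the proof of Proposition~\ref{12} is not a reduction (there the opposite-sign case is disposed of by a separate, identical argument; that device is unavailable to you, because your argument that $\mathbb{P}(G^+)=0$ is not standalone --- it needs disjointness from $G^-$ \emph{together with} equality of the two probabilities). Two short repairs. (i) Replace spin-flip symmetry by the level-swap automorphism $(x,y,i)\mapsto(x,y,1-i)$ of the periodic slab: this preserves $\mathbb{P}_p$ for every $p\in(0,1)$ and interchanges $G^+$ and $G^-$, so your ergodicity-plus-disjointness argument runs verbatim for all $p$. (ii) Avoid symmetry altogether: with probability at least $p^{8}$ the eight spins in $C_{(0,0)}, C_{(1,0)}, C_{(0,1)}, C_{(1,1)}$ are initially $+1$, and then each has energy at most $-2$ forever and never flips; this event is disjoint from both $G^+$ and $G^-$, so each has probability strictly less than $1$, hence $0$ by ergodicity. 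Option (ii) is exactly the frozen-block observation the paper makes at the start of Section~\ref{sec: thm_3}.
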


The second proposition shows that neighbors of fixed columns are fixed.
\begin{proposition}\label{72}
Let $u,v \in \mathbb{Z}^2$ be neighbors. With probability one, if $C_u$ flips finitely often, then so does $C_v$.
\end{proposition}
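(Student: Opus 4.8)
The plan is to mirror the structure of the $S_3$ argument, since Proposition~\ref{72} is the exact analogue of Proposition~\ref{12} one dimension lower. By translation invariance and symmetry I would take $u=(1,1)$ and $v=(2,1)$, and reduce to showing that when $C_{(1,1)}$ fixates to $+1$, the neighbouring column $C_{(2,1)}$ fixates as well. The first structural step is to establish, exactly as in the $S_3$ proof, that if $C_{(1,1)}$ fixates to $+1$ then for all large times each of its two vertices has at least one stable $+1$ neighbour outside the column (the count drops from two to one because in $S_2$ each vertex has only four neighbours rather than six). This is an application of Lemma~\ref{22} together with Lemma~\ref{lem: stability}: if some vertex of $C_{(1,1)}$ repeatedly had enough non-positive-or-unstable external neighbours, one could force it to acquire three opposite-sign neighbours infinitely often and flip, contradicting fixation.

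Next I would invoke Proposition~\ref{71} in place of Proposition~\ref{11}: if $C_{(2,1)}$ does not fixate it must flip infinitely often and, not being eventually monochromatic, it is monochromatic with an unstable spin infinitely often. By spatial symmetry assume $\sigma_{(2,1,0)}$ is the unstable vertex. This splits into the two cases $B^+$ and $B^-$ according to whether $C_{(2,1)}$ is positive or negative at these recurrent times, just as in the proof of Proposition~\ref{12}. In each case the unstable $+1$ (resp.\ $-1$) vertex $(2,1,0)$ must have its remaining external neighbours forced to the opposite sign, and one then chases the configuration outward using e-absence lemmas. Here I would prove and use $S_2$ analogues of Lemmas~\ref{30}, \ref{31} and \ref{32}; the proofs of these are the same contradiction-via-Lemma~\ref{22} arguments, with the energy bookkeeping adjusted for degree-$4$ vertices and the doubled intra-column edge noted in the appendix's opening remark.

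The main obstacle I anticipate is exactly that doubled-edge subtlety: because the edge between $(x,y,0)$ and $(x,y,1)$ is counted twice in the energy \eqref{eq: energy}, the stability thresholds within a column differ from the naive neighbour count, and the statement ``exactly three same-sign neighbours infinitely often'' from Lemma~\ref{lem: stability} must be re-examined for $k=2$. I would need to check carefully that an unstable monochromatic column still forces its external neighbours the way the $S_3$ argument does, and that the finite sequence of flips used to shrink or contradict a recurrent configuration remains available. Once the degree count and the doubled edge are handled correctly in the $S_2$ versions of the auxiliary lemmas, the logical skeleton — reduce to a recurrent monochromatic unstable column, split into $B^\pm$, and derive a contradiction from an e-absent configuration in a $2\times 2$ block of columns — transfers verbatim, and combining Propositions~\ref{71} and \ref{72} by the same translation-ergodicity argument as in Section~\ref{sec: thm_3} yields fixation of all sites in $S_2$.
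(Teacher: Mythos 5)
Your skeleton matches the paper's: reduce to $u=(1,1)$, $v=(2,1)$, argue by contradiction from a recurrent unstable spin in $C_{(2,1)}$, split into positive/negative cases, and kill each case with e-absence lemmas via Lemma~\ref{22}. However, your one concrete quantitative claim is wrong, and it is precisely the point where the doubled edge matters. You assert that the requirement ``at least two stable $+1$ neighbours outside the column'' from the proof of Proposition~\ref{12} drops to ``at least one'' because a vertex of $S_2$ has ``only four neighbours rather than six.'' Under the periodic boundary conditions in force here, the intra-column edge is counted twice in \eqref{eq: energy}, so each vertex effectively has six edges, exactly as in $S_3$: a vertex of a positive column has two positive intra-column edge-slots and four external neighbours, and it is unstable precisely when at least three of those four external neighbours are negative. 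Hence the correct conclusion is unchanged from the $S_3$ case: each vertex of a fixated positive column must eventually have at least \emph{two} stable positive external neighbours (any external neighbour that is merely unstable-positive can be flipped to $-1$ by Lemma~\ref{22}, since the four external neighbours of a vertex are pairwise non-adjacent). Your weaker ``at least one'' version is not just a cosmetic slip: the second stable positive neighbour is what produces, after discarding the unstable vertex $(2,1,0)$ and applying a reflection, the extra positive spin $\sigma_{(1,0,0)}=+1$ in the paper's event $B$, and that spin is exactly what makes the four-column e-absence lemma applicable (it guarantees $C_{(1,0)}$ contains a positive spin, forcing $C_{(2,0)}$ to be negative at large times). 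With only one stable positive neighbour, which could be $(0,1,0)$, this step and everything downstream of it fails. You flagged the doubled-edge subtlety as the main obstacle, but then resolved it incorrectly rather than carrying out the bookkeeping.

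A secondary structural difference: you propose to port $S_2$ analogues of Lemmas~\ref{30}, \ref{31} and \ref{32}. The paper's appendix instead replaces all of this with the single, much simpler Lemma~\ref{S2_1}: if a $2\times 2$ block of columns has one positive corner column, a positive spin in each of the other three, and some negative spin among them, this is e-absent, because every non-positive spin in the block has at least three positive edge-slots (counting the doubled intra-column edge) and the all-positive block, once reached, is absorbing. This absorbing-block observation is special to $k=2$ and is what makes the appendix proof short; your plan to redo Lemmas~\ref{31} and \ref{32} for $k=2$ is not obviously wrong, but it is heavier, entirely unverified in your proposal, and unnecessary. As written, the proposal has a genuine gap and would need the neighbour count repaired and at least one workable e-absence lemma actually proved before the case analysis can be carried out.
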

%\end{comment}
We will use the following lemma repeatedly in the proof:
\bl \label{S2_1}
Let $A$ be the event that $C_{(1,1)}$ is positive, each of the columns $C_{(1,2)}$,  $C_{(2,1)}$ and $C_{(2,2)}$ contains at least one positive spin, and at least one of these columns contains a negative spin. $A$ is e.absent.
\el
%\begin{comment}
\begin{remark}
By identical reasoning, the lemma holds with all positive spins replaced by negative.
\end{remark}

\begin{proof} 
If $A$ occurs infinitely often with positive probability then at each occurrence of $A$, all non-positive spins among the four columns have at least 3 positive neighbors (counting spins in the same column twice) and thus have positive probability to flip to $+1$. By Lemma \ref{22}, all will flip to  $+1$ and fixate, and this is a contradiction.
\end{proof}

%As before, Proposition \ref{72} follows directly form the following lemma.
%\bl \label{S2_2}
%Let $B$ be the event that $C_{(1,1)}$ is positive and stable (all its spins are stable) and $C_{(2,1)}$ contains an unstable vertex. Then $B$ is e.absent.
%\el

\begin{proof}[Proof of Proposition~\ref{72}]
By translation invariance and symmetry, we can take $u=(1,1)$ and $v=(2,1)$. We will show that almost surely, when $C_{(1,1)}$ fixates to $+1$, then $C_{(2,1)}$ also fixates (either to $+1$ or $-1$). The case in which $C_{(1,1)}$ fixates to $-1$ is handled similarly.

We will assume the contrary, that with positive probability $C_{(1,1)}$ fixates to $+1$ but a vertex in $C_{(2,1)}$ flips infinitely often. It must then be unstable infinitely often. By spatial symmetry and Lemma~\ref{22}, we will assume this unstable spin to be $\sigma_{(2,1,0)}$ and take it to be positive. As in the proof of Proposition~\ref{12}, almost surely, if $C_{(1,1)}$ fixates to $+1$ then for all large times, each vertex in this column must have at least 2 stable neighbors outside $C_{(1,1)}$ with spin $+1$. The proof is exactly as before -- if not, then a spin of $C_{(1,1)}$ has at least three unstable neighbors which can flip to $-1$ and then force it, by Lemma~\ref{22}, to flip. By spatial symmetry then, the event
\[
B = \{C_{(1,1)} \text{ is positive, }\sigma_{(2,1,0)} \text{ is positive unstable and } \sigma_{(1,0,0)} = +1\}
\]
occurs infinitely often with positive probability. Define $B^+ = B \cap \{\sigma_{(2,1,1)}=+1\}$ and $B^- = B \cap \{\sigma_{(2,1,1)}=-1\}$. We give two cases.

%By way of contradiction, assume that $B$ occurs infinitely often with positive probability. By symmetry, $B \cap \{\sigma_{(1,1,0)} \text{ is unstable}\}$ occurs infinitely often with positive probability. By Lemma \ref{lem: stability}, almost surely for all large times at which $\sigma_{(1,1,0)}$ is unstable, it has exactly 3 positive and 3 negative neighbors.

%  Since $C_{(1,1)}$ is stable,  either $\sigma(1,0,0)$ or $\sigma(1,2,0)$ (or both) are positive. By symmetry assume that $\sigma(1,0,0)$ is positive at such times. By Lemma \ref{22} it follows that  $A1= A \cup \{C_{(1,0)}\mathrm{ is positive}\} $ occurs i.o. 
%We distinguish two cases, depending on the sign of the spin $(2,1,1)$. Clearly, $A1= A1\cap\{\sigma(2,1,1)=+1\}\cup A1\cap\{\sigma(2,1,1)=+1\}$, so eithe case 1 or case 2 (or both) occur i.o. We show that both cases contradict the Lemma \ref{lem: stability}.

{\bf Case 1.} $B^+$ occurs infinitely often with positive probability. Almost surely for all large times at which $B^+$ occurs, $C_{(2,0)}$ must be negative; this follows from Lemma~\ref{S2_1}. Furthermore at all such large times $\sigma_{(2,1,0)}$ has exactly three positive and three negative neighbors. This implies that $\sigma_{(3,1,0)} = \sigma_{(2,2,0)} = -1$. Further, both of $\sigma_{(3,1,1)}$ and $\sigma_{(2,2,1)}$ must be positive, for if either were negative, $\sigma_{(2,1,0)}$ could flip to $-1$, leaving $\sigma_{(2,1,1)}$ with 4 negative neighbors and an energy lowering flip. We can now apply Lemma~\ref{S2_1} again to both blocks of columns $C_{(1,1)}, C_{(2,1)}, C_{(1,2)}, C_{(2,2)}$ and $C_{(2,1)}, C_{(2,2)}, C_{(3,1)}, C_{(3,2)}$ respectively to deduce that $C_{(1,2)}$ and $C_{(3,2)}$ are negative. But this leaves $\sigma_{(2,2,1)}$ positive with at least 4 negative neighbors, so it can make an energy lowering flip, a contradiction for large times.

{\bf Case 2.} $B^-$ occurs infinitely often with positive probability. Again by Lemma \ref{S2_1}, $C_{(2,0)}$ must almost surely be negative at all large times at which $B^-$ occurs. The spin $\sigma_{(2,1,0)}$ must have three positive neighbors at large times, so $\sigma_{(2,2,0)}=\sigma_{(3,1,0)}=+1 $.  By Lemma \ref{S2_1} again applied to $C_{(1,1)}, C_{(2,1)}, C_{(1,2)}$ and $C_{(2,2)}$, the column $C_{(1,2)}$ is negative for all large times at which $B^-$ occurs. As above, both spins $\sigma_{(3,1,1)}$ and $\sigma_{(2,2,1)}$ must be positive, lest $\sigma_{(2,1,0)}$ flips to $+1$ and giving $\sigma_{(2,1,1)}$ an energy lowering flip. But now if $\sigma(2,1,0)$ flips to $-1$, the positive spin at $(2,2,0)$ has at least four negative neighbors, so it can make an energy lowering flip, a contradiction for large times.
\end{proof}
%\vspace {.5cm}
%{\bf Proof of Theorem \ref{01} for $k=2$}

%The proof is identical to the $k=3$ case. 

\noindent
{\bf Acknowledgements.} M.D. and V.S. thank the Courant Institute for funds and hospitality while some of this work was done. We also thank
Leonardo T. Rolla for fruitful discussions and Sidney Redner for useful communications. Work of V.S. was supported by ESF RGLIS network.

\end{document}